\documentclass[10pt]{amsart}

\usepackage{amsfonts}
\usepackage{amssymb}
\usepackage{amsmath, amsthm, latexsym}
\usepackage[all]{xy}
\usepackage{tikz, tikz-cd}
\usepackage{hyperref}
\usepackage[margin=1.2in]{geometry}
\usepackage{color}

\def \C{\mathbb{C}}
\def \Z{\mathbb{Z}}
\def \R{\mathbb{R}}

\def \GL{\operatorname{GL}}

\def \Spec{\operatorname{Spec}}

\def \Sch{\textbf{Sch}}

\def \Hom{\operatorname{Hom}}
\def \k{\textbf{k}}
\def \B{\mathfrak{B}}
\def \PP{\operatorname{PP}}
\def \fr{\textup{fr}}

\newcommand*\abs[1]{\left\lvert#1\right\rvert}

\theoremstyle{plain}
\newtheorem{theorem}{Theorem}[section]
\newtheorem{lemma}[theorem]{Lemma}
\newtheorem{prop}[theorem]{Proposition}
\newtheorem{corollary}[theorem]{Corollary}

\theoremstyle{definition}
\newtheorem{example}[theorem]{Example}
\newtheorem{definition}[theorem]{Definition}
\newtheorem{remark}[theorem]{Remark}

\begin{document}

\title{Moduli of toric principal bundles}

\author{Shaoyu Huang}
\address{Department of Mathematics, University of Pittsburgh,
Pittsburgh, PA, USA.}
\email{shh123@pitt.edu}

\author{Kiumars Kaveh}
\address{Department of Mathematics, University of Pittsburgh, Pittsburgh, PA, USA.}
\email{kaveh@pitt.edu}

\thanks{The second author is partially supported by a
Simons Foundation Collaboration Grants for Mathematicians (Grant ID: 210099) and a National Science Foundation Grant 
(Grant ID: 1200581).}

\subjclass[2020]{Primary: 14M25, Secondary: 14D22}
\date{\today}

\begin{abstract}
Let $G$ be a reductive algebraic group.
A toric principal $G$-bundle is a principal $G$-bundle over a toric variety together with a torus action commuting with the $G$-action. In \cite{Kaveh-Manon}, extending the Klyachko classification of toric vector bundles, toric principal bundles are classified by \emph{piecewise linear maps} to the (extended) Tits building of $G$. In this paper, we use the classification in \cite{Kaveh-Manon} to construct a moduli space of (framed) toric principal bundles with given total equivariant characteristic class, as a locally closed subvariety of a product of partial flag varieties. This extends the construction of moduli of toric vector bundles by Sam Payne \cite{Sam}.
\end{abstract}

\maketitle
\tableofcontents

\noindent{\bf This is a preliminary version. Comments are welcomed.}
\bigskip

\section{Introduction}
Throughout $\k$ denotes the base field which we take to be algebraically closed. We let $T\cong (\k^*)^n$ be an $n$-dimensional algebraic torus over $\k$. We let $M$ and $N$ be the lattices of characters
and cocharacters of $T$ respectively. Let $N_{\R}=N\otimes_{\Z}\R$ and $M_{\R}=M\otimes_{\Z}\R$. Let $\Sigma$ be a fan in $N_{\R}\cong {\R}^n$ and $X_\Sigma$ be its associated toric variety.

Let $G$ be a connected reductive algebraic group over $\k$. A \emph{toric principal $G$-bundle} is a principal $G$-bundle over a toric variety $X_\Sigma$ together with a torus action that lifts that of $X_\Sigma$ and commutes with the $G$-action (\cite{Biswas}).

Fix a point $x_0$ in the open torus orbit in $X_\Sigma$. By a \emph{framed} toric principal bundle, we mean a toric principal bundle $\mathcal{P}$ together with a choice of $p_0\in\mathcal{P}_{x_0}$, where $\mathcal{P}_{x_0}$ is the fiber over the distinguished point $x_0$.

In this paper, we construct a moduli space $\mathcal{M}^{\fr}_{G, \Psi}$ of framed toric principal $G$-bundles on $X_\Sigma$ with fixed total equivariant characteristic class $\Psi$. We construct $\mathcal{M}^{\fr}_{G, \Psi}$ as a locally closed subset of a product of partial flag varieties of $G$. We show that $\mathcal{M}^{\fr}_{G, \Psi}$ is a coarse moduli space, but in fact we believe that it is moreover a fine moduli space (Section \ref{sec-fine-moduli}).

\begin{remark}
In \cite{Payne-moduli}, Payne constructs the moduli space of (framed) toric vector bundles of rank $r$ with fixed equivariant Chern classes, as a locally closed subvariety of a product of partial flag varieties. Our construction of $\mathcal{M}^{\fr}_{G, \Psi}$ is an extension of Payne's construction. It seems that our equalities and inequalities cutting out the moduli space in the product of partial flag varieties (in the case $G=\GL(r)$) are different from those in \cite{Payne-moduli} (called ``rank conditions").     
\end{remark}

\begin{remark}
If one fixes an embedding of $G$ into a general linear group $\GL(N)$, one should be able to realize the moduli space $\mathcal{M}^{\fr}_{G, \Psi}$ as a subvariety of the moduli space of rank $N$ toric vector bundles with fixed equivariant Chern classes. We would like to point out that our construction of the moduli space $\mathcal{M}^{\fr}_{G, \Psi}$ and its embedding in a product of partial flag varieties of $G$ in this paper, is intrinsic to $G$ and does not involve a choice of embedding of $G$ into a $\GL(N)$.      
\end{remark}

In \cite{Kaveh-Manon}, framed toric principal bundles are classified by piecewise linear maps $\Phi:\abs{\Sigma}\to\tilde{\mathfrak{B}}(G)$. Here $\abs{\Sigma}$ is the support of the fan $\Sigma$, namely, the union of the cones in $\Sigma$ and  $\tilde{\mathfrak{B}}(G)$ denotes the \emph{extended Tits building of $G$} (see Definition \ref{def-ext-bldg}). Roughly speaking, $\tilde{\mathfrak{B}}(G)$ can be realized as the union 
\begin{equation} \label{eq-ext-bulid}
    \tilde{\mathfrak{B}}(G)=\bigcup_{H\subset G}\Lambda^\vee_{\R}(H),
\end{equation}
where the union is over all the maximal tori $H$ in $G$ and $\Lambda^\vee(H)$ denotes the cocharacter lattice of $H$ and $\Lambda^\vee_{\R}(H)=\Lambda^\vee(H)\otimes_{\Z}\R$. In \eqref{eq-ext-bulid}, the $\R$-vector spaces $\Lambda^\vee_{\R}(H)$ overlap each other according to how apartments in the Tits building of $G$ overlap. We call $\tilde{A}(H):=\Lambda^\vee_{\R}(H)$ the \emph{extended apartment} associated to $H$.

Throughout the paper, we fix a maximal torus $H \subset G$ and let $S(H)^W_\R$ denote the $\R$-algebra of $W$-invariant polynomials on the cocharacter lattice of $H$ where $W$ is the Weyl group of $(G, H)$. 

Let $\mathcal{P}$ be a toric principal $G$-bundle. The algebraic Chern-Weil homomorphism, corresponding to the principal bundle $\mathcal{P}$, is an $\R$-algebra homomorphism $S(H)^W_\R \to A^*(X_\Sigma; \R)$. It assigns a characteristic class in $A^*(X_\Sigma;\R)$ to a $W$-invariant polynomial $q$. 
Similarly, the equivariant Chern-Weil homomorphism is an $\R$-algebra homomorphism $\Psi = \Psi_\mathcal{P}$ which assigns to $q \in S(H)^W_\R$ an equivariant characteristic class $\Psi(q) \in A^*_T(X_\Sigma;\R)$. We recall that the equivariant Chow cohomology ring $A^*_T(X_\Sigma;\R)$ is naturally isomorphic to the algebra of piecewise polynomial functions $\PP^*(\Sigma)$ on $N_{\R}$ with respect to $\Sigma$ (see \cite{Brion, Sam}). We call the homomorphism $\Psi=\Psi_\mathcal{P}: S(H)^W_\R \to \PP^*(\Sigma)$, the \emph{total equivariant characteristic class} of $\mathcal{P}$.

In \cite{Kaveh-Manon}, it is observed that any $W$-invariant polynomial $q$ naturally gives rise to a map $\tilde{q}: \tilde{\mathfrak{B}}(G) \to \R$ and if $(\mathcal{P}, p_0)$ is a framed toric principal $G$-bundle with piecewise linear map $\Phi:\abs{\Sigma}\to\tilde{\mathfrak{B}}(G)$,  then the equivariant characteristic class $\Psi(q)$ of $\mathcal{P}$ corresponding to $q$ is given by $\tilde{q}\circ \Phi$ (see \cite[Section 3]{Kaveh-Manon}).

Let $\Psi: S(H)^W_\R \to \PP^*(\Sigma)$ be an $\R$-algebra homomorphism. We construct a coarse moduli space $\mathcal{M}^{\fr}_{G,\Psi}$ of all framed toric principal $G$-bundles $(\mathcal{P}, p_0)$ on $X_\Sigma$ whose total equivariant characteristic class is $\Psi$. That is, for any $\mathcal{P} \in \mathcal{M}^{\fr}_{G, \Psi}$ and any $q\in S(H)^W_\R$, the equivariant characteristic class of $\mathcal{P}$ corresponding to $q$ coincides with $\Psi(q)$.  

Let $\Phi:\abs{\Sigma}\to\tilde{\mathfrak{B}}(G)$ be the piecewise linear map corresponding to $(\mathcal{P}, p_0)$. Let $\rho\in\Sigma(1)$ be a ray with primitive vector $v_\rho$. Then $\Phi(v_\rho) \in \tilde{\mathfrak{B}}(G)$ determines a unique parabolic subgroup $P_\rho$ of $G$. It is the parabolic subgroup corresponding to the smallest simplex in the Tits building of $G$ containing $\Phi(v_\rho)$. One notes that, for each ray $\rho$, the value $\Phi(v_\rho)$ is determined by $\Psi$ and the parabolic subgroup $P_\rho$. It follows that the piecewise linear map $\Phi$, and hence the framed toric principal bundle $(\mathcal{P}, p_0)$, is uniquely determined by $\Psi$ and the collection of parabolic subgroups $\{P_\rho\mid \rho\in\Sigma(1)\}$.

Throughout we also fix a Borel subgroup $B$ in $G$ containing the maximal torus $H$. This is equivalent to the choice of a positive Weyl chamber in the cocharacter lattice of $H$. We will refer to a parabolic subgroup containing $B$ as a standard parabolic subgroup. We use the choice of $B$ to enumerate the conjugacy classes of parabolic subgroups in $G$ by standard parabolic subgroups. 

{We note that not every homomorphism $\Psi: S(H)_\R^W \to \PP^*(\Sigma)$ is in the image of equivariant Chern-Weil homomorphism. In other words, $\Psi$ may not be the equivariant total characteristic class of any toric principal bundle over $X_\Sigma$ in which case the moduli space is empty. When $\Psi$ is indeed the total equivariant characteristic class of a toric principal bundle then it determines a collection of standard parabolic subgroups $\{Q_\rho \mid \rho \in \Sigma(1)\}$ as follows:}
Let $\rho \in \Sigma(1)$ be a ray. Evaluating $\Psi$ at the primitive vector $v_\rho$ we get a homomorphism $\Psi_\rho: S(H)_\R^W \to \R$ which in turn determines a $W$-orbit in the $\R$-vector space $\Lambda^\vee(H)_\R = \Lambda^\vee \otimes \R$, where $\Lambda^\vee(H)$ denotes the cocharacter lattice of $H$. We let $Q_\rho$ be the standard parabolic subgroup corresponding to this $W$-orbit (in other words, the standard parabolic subgroup corresponding to the minimal face of the positive Weyl chamber intersecting this orbit).

\begin{theorem}  \label{th-intro-main}
    Let $\Sigma$ be a complete fan. Let $\Psi: S(H)^W_\R \to \PP^*(\Sigma)$ an $\R$-algebra homomorphism with corresponding standard parabolic subgroups $\{Q_\rho \mid \rho \in \Sigma(1)\}$. Then the set $\mathcal{M}_{G,\Psi}^{\fr}$ of isomorphism classes of framed toric principal $G$-bundles on $X_\Sigma$ with total equivariant characteristic class $\Psi$ can naturally be identified with a locally closed subset of the variety $\prod_{\rho\in\Sigma(1)}G/ Q_\rho$. {More precisely, $\mathcal{M}_{G,\Psi}^{\fr}$ is a coarse moduli space}.
\end{theorem}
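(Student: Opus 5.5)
The plan is to turn the Kaveh--Manon classification \cite{Kaveh-Manon} into a parametrization by flags. By that classification, isomorphism classes of framed toric principal $G$-bundles correspond bijectively to piecewise linear maps $\Phi:\abs{\Sigma}\to\tilde{\mathfrak{B}}(G)$. We record $\Phi$ by its values $\Phi(v_\rho)$ on primitive ray generators. Since $\Psi(q)=\tilde q\circ\Phi$, the value $\Phi(v_\rho)$ lies in the $W$-orbit determined by $\Psi_\rho$. Write $\lambda_\rho$ for the unique representative of this orbit in the closed positive Weyl chamber of $\Lambda^\vee_\R(H)$. Then points of $\tilde{\mathfrak{B}}(G)$ in this orbit are exactly the $G$-translates $g\cdot\lambda_\rho$. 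Moreover, the stabilizer of $\lambda_\rho$ under the $G$-action on $\tilde{\mathfrak B}(G)$ is precisely the standard parabolic $Q_\rho$, and the parabolic attached to $g\cdot\lambda_\rho$ is $P_\rho=gQ_\rho g^{-1}$. This gives an injection from the set of candidate ray data $\{\Phi(v_\rho)\}$ into $\prod_{\rho}G/Q_\rho$, sending a ray datum to $(g_\rho Q_\rho)_\rho$. This is the identification of $\mathcal{M}^{\fr}_{G,\Psi}$ as a set.

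Next I would characterize which tuples $(g_\rho Q_\rho)$ extend to a piecewise linear $\Phi$. This requires that, for each maximal cone $\sigma\in\Sigma(n)$, the points $g_\rho\cdot\lambda_\rho$ for $\rho\in\sigma(1)$ lie in a common extended apartment $\tilde A(H_\sigma)$. It must also hold that the resulting linear map on $\sigma$ is compatible with $\Psi|_\sigma$, meaning $\tilde q\circ\Phi|_\sigma=\Psi(q)|_\sigma$ for all $q$. Since $\Sigma$ is complete, the bundle is determined on the maximal cones, and gluing across faces is automatic once the values on rays are fixed. Each $\sigma$ then gives a finite disjunction over relative positions. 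Concretely, there must exist $h\in G$ and Weyl elements $w_\rho\in W$ ($\rho\in\sigma(1)$) with $g_\rho Q_\rho=hw_\rho Q_\rho$, and $\Psi|_\sigma$ must equal the polynomial obtained by composing $q$ with the linear extension of $v_\rho\mapsto w_\rho\lambda_\rho$. The second condition depends only on the combinatorial datum $(w_\rho)$, which ranges over a finite set. Hence the locus for $\sigma$ is a finite union of sets of the form $Z_{(w_\rho)}=G\cdot(w_\rho Q_\rho)_\rho$, where the index $(w_\rho)$ runs over those data admissible for $\Psi$.

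The main obstacle is showing that this union is locally closed, and that it is not merely constructible. Each $Z_{(w_\rho)}$ is a $G$-orbit in $\prod_{\rho\in\sigma(1)}G/Q_\rho$, hence locally closed. A finite union of orbits need not be locally closed, however. I would first try to show that the admissible orbits for fixed $\Psi|_\sigma$ are exactly the orbits lying in the "generic" stratum of some closed subvariety. One way is to express membership in a common apartment by closed incidence conditions together with one open condition, namely that the tuple of parabolics $gQ_\rho g^{-1}$ contains a common maximal torus. The requirement that $\Psi|_\sigma$ be linear on $\sigma$ then forces the relevant relative positions to be "maximally transverse" within a closed subset. If this fails in general, I would fall back on a dimension argument. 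The orbits with fixed linear data are permuted by $\operatorname{Stab}_W$ choices producing the same $\Psi|_\sigma$, and one would check that they are pairwise incomparable in the closure order, so their union is open in its closure. Intersecting over all $\sigma$ then gives a locally closed subset.

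Finally, for the coarse moduli statement, I would take a flat family of framed toric principal bundles over a base $S$. The Kaveh--Manon construction works in families, so it produces, locally on $S$, parabolic reductions along each divisor $D_\rho$ of type $Q_\rho$ via the framing. This gives a morphism $S\to\prod_\rho G/Q_\rho$ landing in $\mathcal{M}^{\fr}_{G,\Psi}$, functorially in $S$. Together with the bijection on $\k$-points, this shows that $\mathcal{M}^{\fr}_{G,\Psi}$ is a coarse moduli space.
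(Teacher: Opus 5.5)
Your setup is the paper's. Via Kaveh--Manon you record $\Phi(v_\rho)=x_\rho\lambda_\rho$ by the coset $x_\rho Q_\rho$ (using $\operatorname{Stab}_G(\lambda_\rho)=P_{\lambda_\rho}=Q_\rho$). Over each maximal cone you describe the compatible tuples as $G\cdot S_\sigma$ with $S_\sigma\subset\prod_{\rho\in\sigma(1)}(G/Q_\rho)^H$ finite, and then you intersect the preimages under $\pi_\sigma$. Your per-cone condition, equality of the polynomials $q\circ L$ and $\Psi(q)|_\sigma$, is the right one; the paper's condition (2) only tests values at the $v_\rho$.

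The gap is exactly at the step you call the main obstacle: local closedness is never proved. Both routes you describe are left as things you "would try" or "would check". You are right that a finite union of orbits need not be locally closed: for $\mathrm{SL}_2$ acting on $(\mathbb{P}^1)^3$, the small diagonal together with the open orbit is not. The paper's own proof simply asserts that the $G$-saturation of a finite set is locally closed, so that cannot be cited to close the gap.

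The missing observation is that, for a maximal cone, your union is a single orbit.
\begin{itemize}
\item Since $\Sigma$ is complete, each $\sigma\in\Sigma(n)$ spans $N_\R$. Let $L,L'\colon N_\R\to\Lambda^\vee_\R(H)$ be the linear maps attached to two admissible data $(w_\rho)$ and $(w'_\rho)$. Then $q\circ L=q\circ L'$ on $\sigma$, hence on all of $N_\R$, for every $q\in S(H)^W_\R$.
\item $W$-invariant polynomials separate $W$-orbits, so $N_\R=\bigcup_{w\in W}\ker(L'-wL)$. A real vector space is not a finite union of proper subspaces, so $L'=wL$ for a single $w\in W$.
\item Hence $w'_\rho\lambda_\rho=ww_\rho\lambda_\rho$, i.e. $\dot w'_\rho Q_\rho=\dot w\dot w_\rho Q_\rho$ for all $\rho\in\sigma(1)$ with the same $\dot w\in N_G(H)$. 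Therefore $Z_{(w'_\rho)}=Z_{(w_\rho)}$.
\end{itemize}
So the locus for each maximal cone is one $G$-orbit (or empty), hence locally closed. Then $\mathcal{M}^{\fr}_{G,\Psi}$ is a finite intersection of the locally closed sets $\pi_\sigma^{-1}\bigl(G\cdot(\dot w^\sigma_\rho Q_\rho)_\rho\bigr)$, and your incomparability claim becomes trivial.

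Separately, your last paragraph presupposes a family version of the Kaveh--Manon classification. The paper only states this as an expectation in Section \ref{sec-fine-moduli}. Also, a natural transformation plus a bijection on points does not give property (2) of a coarse moduli space. The paper does not supply this either, so what is actually established, once the argument above is added, is the identification with a locally closed subset.
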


The group $G$ acts on $\prod_{\rho\in\Sigma(1)}G/ Q_\rho$ by acting on each $G/Q_\rho$ from left. Moreover, this action leaves $\mathcal{M}_{G,\Psi}^{\fr}$ invariant. The following is immediate from Theorem \ref{th-intro-main}.
\begin{corollary}
The quotient stack $\mathcal{M}_{G,\Psi}^{\fr} / G$ is a {(coarse)} moduli stack of toric principal $G$-bundles on $X_\Sigma$ with total equivariant characteristic class $\Psi$.  
\end{corollary}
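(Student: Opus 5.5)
The corollary should follow by adding to Theorem \ref{th-intro-main} one observation: changing the framing is exactly the left $G$-action on $\prod_{\rho\in\Sigma(1)}G/Q_\rho$. The plan has three steps.

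\emph{Step 1: frame change is the left action.} Let $(\mathcal{P},p_0)$ be a framed toric principal $G$-bundle and $g\in G$. Since $G$ acts simply transitively on $\mathcal{P}_{x_0}$, every other framing of $\mathcal{P}$ is $p_0g$. Using the Kaveh--Manon construction, I would compute the piecewise linear map and the parabolics attached to $(\mathcal{P},p_0g)$. The parabolic $P_\rho$ is obtained from the $T$-limit behaviour along $v_\rho$ of the orbit through $p_0$, read off in $G$ via the identification $G\cong\mathcal{P}_{x_0}$, $h\mapsto p_0h$. Replacing $p_0$ by $p_0g$ changes this identification by $h\mapsto gh$, so it should give $P_\rho\mapsto g^{-1}P_\rho g$. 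Equivalently, the point $(P_\rho)_\rho\in\prod_\rho G/Q_\rho$ is translated by $g^{-1}$. The class $\Psi$ does not change, since $\tilde q$ is conjugation invariant. Hence $\mathcal{M}^{\fr}_{G,\Psi}$ is $G$-stable, and the forgetful map $(\mathcal{P},p_0)\mapsto\mathcal{P}$ is constant on $G$-orbits.

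\emph{Step 2: points and automorphisms.} Let $\varphi:\mathcal{P}\to\mathcal{P}'$ be an isomorphism of toric principal bundles with class $\Psi$. Then $\varphi$ carries $(\mathcal{P},p_0)$ to the framed bundle $(\mathcal{P}',\varphi(p_0))$, and by Step 1 this lies in the $G$-orbit of any framing of $\mathcal{P}'$. Hence isomorphism classes of unframed bundles correspond bijectively to $G$-orbits in $\mathcal{M}^{\fr}_{G,\Psi}$.

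For automorphisms, a $T\times G$-equivariant automorphism of $\mathcal{P}$ is determined by the image $p_0h$ of $p_0$. Indeed, the $T\times G$-orbit of $p_0$ is dense in $\mathcal{P}$, because the $T$-orbit of $x_0$ is dense in $X_\Sigma$. The element $h$ must lie in the stabilizer of the corresponding point. So $\operatorname{Aut}(\mathcal{P})$ is identified with the stabilizer in $G$ of the point of $\mathcal{M}^{\fr}_{G,\Psi}$, which is exactly what the quotient stack predicts.

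\emph{Step 3: families.} Let $\mathcal{P}_S$ be a family over a base $S$ of toric principal $G$-bundles with class $\Psi$. The restriction $E:=\mathcal{P}_S|_{S\times\{x_0\}}$ is a principal $G$-bundle on $S$. Pulling $\mathcal{P}_S$ back to $E$ gives a tautological framing, hence a framed family over $E$. By Theorem \ref{th-intro-main} this yields a morphism $E\to\mathcal{M}^{\fr}_{G,\Psi}$, which is $G$-equivariant by Step 1, and therefore a morphism $S\to\mathcal{M}^{\fr}_{G,\Psi}/G$. The construction is functorial in $S$, so we get a natural transformation from the moduli functor of unframed bundles to the quotient stack.

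It remains to show this transformation has the coarse universal property. Any natural map from the unframed moduli functor to a scheme $Y$ composes with the forgetful map to give a $G$-invariant map from the framed functor. By the coarse property of $\mathcal{M}^{\fr}_{G,\Psi}$, this comes from a unique $G$-invariant morphism $\mathcal{M}^{\fr}_{G,\Psi}\to Y$, which descends to the quotient stack. Together with the bijection on $\k$-points from Step 2, this gives the statement.

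The main obstacle is Step 1: verifying from the Kaveh--Manon classification that changing the frame from $p_0$ to $p_0g$ acts on the parabolics $P_\rho$ by conjugation, and hence on $\prod_\rho G/Q_\rho$ by left translation. The rest is formal bookkeeping with torsors.
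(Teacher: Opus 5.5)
Your proposal is correct and follows the paper's route. The paper treats the corollary as immediate from Theorem \ref{th-intro-main}, once one knows that the left $G$-action on $\prod_{\rho\in\Sigma(1)}G/Q_\rho$ preserves $\mathcal{M}^{\fr}_{G,\Psi}$ and realizes change of frame; your Steps 2 and 3 spell out the torsor bookkeeping the paper leaves implicit.

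The step you flag as the main obstacle is already Lemma \ref{lem-KM}. That lemma says $(\mathcal{P},p_0g)$ has piecewise linear map $\hat{\alpha}_g\circ\Phi$ with $\alpha_g(x)=g^{-1}xg$, so $P_\rho\mapsto g^{-1}P_\rho g$, i.e.\ $x_\rho Q_\rho\mapsto g^{-1}x_\rho Q_\rho$, exactly as you predicted.
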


\begin{remark} \label{rem-fine-moduli-space}
{In \cite{Payne-moduli} a fine moduli space is constructed for toric vector bundles with given total equivariant Chern class. We believe that $\mathcal{M}_{G,\Psi}^{\fr}$ is in fact a \emph{fine} moduli space as well. That is, the natural principal $G$-bundle on it is a universal bundle classifying all families of framed toric principal $G$-bundles on $X_\Sigma$. In Section \ref{sec-fine-moduli} we sketch some ideas towards a proof of this.}      
\end{remark}

\bigskip
\noindent{\bf Acknowledgement:} The second author is supported by the National Science Foundation grant (DMS-2101843). We thank Mainak Poddar, Christopher Manon, Sam Payne and Roman Fedorov for useful discussions. 

\section{Background on Tits Buildings}
\subsection{Tits building of a linear algebraic group}\label{subsec-building}
{In this section we review some background material about the Tits building associated to a linear algebraic group following \cite{Kaveh-Manon, Huang-Kaveh}}. 

A \emph{building} is a pair $(\Delta,\mathcal{A})$ where $\Delta$ is a simplicial complex and $\mathcal{A}$ is a family of subcomplexes $A$ (called \emph{apartments}) satisfying certain axioms.

Let $G$ be a linear algebraic group over a field $\textbf{k}$. To $G$ there corresponds a certain kind of simplicial complex called the \emph{Tits building} of $G$ which we denote by $\Delta(G)$. A parabolic subgroup $P$ of $G$ is \emph{parabolic} if $P$ contains a conjugate of $B$. The set of simplices in $\Delta(G)$ is the set of parabolic subgroups of $G$ ordered by reverse inclusion. The apartments in $\Delta(G)$ correspond to maximal tori in $G$. For a maximal torus $H \subset G$, the corresponding apartment consists of parabolic subgroups containing $H$. Borel subgroups correspond to the maximal simplices, called \emph{chambers}, in $\Delta(G)$. Since every parabolic subgroup of $G$ contains the solvable radical $R(G)$, the simplicial complexes $\Delta(G)$ and $\Delta(G/R(G))$ are isomorphic. 

\begin{example}
Consider $G = \GL(r)$. A parabolic subgroup $P$ of $\GL(r)$ is the stabilizer of a flag $F_\bullet=(\{0\}\subset F_1\subset\cdots\subset F_k=\C^r)$. This gives a one-to-one correspondence between the simplices in the Tits building of $\GL(r)$ and flags in $\C^r$. In particular, Borel subgroups are stabilizers of complete flags.
\end{example}

In fact, there is a topological space $\B(G)$ together with a triangulation in which simplices in the triangulation (which are subsets of $\B(G)$ homeomorphic to standard simplices) are in one-to-one correspondence with the simplices in $\Delta(G)$ and intersect according to how simplices in $\Delta(G)$ intersect. When $G$ is semisimple, $\B(G)$ can be constructed as follows. For each maximal torus $H \subset G$ let $\Lambda^\vee(H)$ be its cocharacter lattice and let $\Lambda^\vee_{\R}(H)= \Lambda^\vee(H) \otimes_{\Z}\R$. The apartment corresponding to $H$ is $A(H)$, the triangulation of the unit sphere in $\Lambda^\vee_{\R}(H)$ obtained by intersecting with the Weyl chambers. Two simplices, in different apartments, are glued together if the corresponding faces represent the same parabolic subgroup in $G$. 

\begin{definition}
\label{def-B(G)}
We can glue the Coxeter complexes $A(H)$ in the $\Lambda^\vee_{\R}(H)$, for all maximal tori $H$, along their common simplices. The resulting topological space is $\B(G)$  
\end{definition}

In our notation, we distinguish between the building as an abstract simplicial complex, i.e. $\Delta(G)$, and as a topological space, i.e. $\B(G)$. By abuse of terminology we refer to both $\Delta(G)$ and $\B(G)$ as the Tits building of $G$. 



For a linear algebraic group $G$, let $G_{\textup{ss}} = G / R(G)$ be its semisimple quotient. Similarly to the previous construction in the semisimple case, we define $\tilde{\B}(G)$ to be the topological space obtained by gluing the vector spaces $\Lambda^\vee_\R(H)$ for all maximal tori $H \subset G$, along their common faces of Weyl chambers. When $G$ is reductive, the topological space $\tilde{\B}(G)$ is the Cartesian product of $\tilde{\B}(G_{\textup{ss}})$ with the real vector space $\Lambda^\vee(Z) \otimes_\Z \R$, where $Z=Z(G)^\circ$ is the connected component of the identity in the center of $G$. 

\begin{definition}[Extended Tits building of a linear algebraic group]      \label{def-ext-bldg}
For a linear algebraic group $G$, we refer to $\tilde{\B}(G)$ (above) as the \emph{extended Tits building} of $G$. Also, for a maximal torus $H$, we refer to $\Lambda^\vee_\R(H)$ as the \textit{extended apartment} of $H$ and denote it by $\tilde{A}(H)$. When $G$ is semisimple, the extended Tits building $\tilde{\B}(G)$ is the cone over the Tits building of $G$.

We denote by $\tilde{\B}_\Z(G)$ the subset of $\tilde{\B}(G)$ obtained by gluing the lattices $\Lambda^\vee(H)$, for all maximal tori $H$,
and call it the set of \emph{lattice points in the extended Tits building of $G$}.
\end{definition}

\begin{remark}
Our terminology of an \emph{extended Tits building} is motivated by a similar term, namely  \emph{an extended Bruhat-Tits building}, from the theory of Bruhat-Tits buildings for algebraic groups over valued fields (see \cite{Tits} as well as \cite[Remark 1.23]{RTW}).
\end{remark}




\subsection{One-parameter subgroups and Tits building} 
\label{subsec-1-para-subgp}
In this section, following \cite[Section 1.3]{Kaveh-Manon}, we recall a natural way to realize the set of lattice points in $\tilde{\mathfrak{B}}(G)$ as certain equivalence classes of one-parameter subgroups in $G$ (Proposition \ref{prop-tilde-B-one-para}).  
See \cite[Section 1.3]{Kaveh-Manon} for details and proofs. This construction of the Tits building from one-parameter subgroups also appears, in somewhat different form, in \cite[Section 2.2]{Mumford}.

Let $G$ be a linear algebraic group. A \emph{one-parameter subgroup} of $G$ is a homomorphism of algebraic groups $\lambda:\mathbb{G}_m\to G$. We say two one-parameter subgroups $\lambda_1$ and $\lambda_2$ are \emph{equivalent}  if $\lim_{s\to0}\lambda_1(s)\lambda_2(s)^{-1}$ exists in $G$ and write it as $\lambda_1\sim\lambda_2$. One shows that this is indeed an equivalence relation. 

\begin{definition}[Parabolic subgroup associated to a one-parameter subgroup]
For a one-parameter subgroup $\lambda:\mathbb{G}_m\to G$, define
    \begin{equation*}
        P_\lambda = \{g\in G\mid \lim_{s\to 0}\lambda(s)g\lambda(s)^{-1}\text{ exists in } G\}.
    \end{equation*}
One shows that $P_\lambda$ is a parabolic subgroup in $G$. Equivalently, we can define $P_\lambda$ using the equivalence relation $\sim$ (see \cite[Proposition 1.8]{Kaveh-Manon}): 
\begin{equation}  \label{equ-P-lambda}
P_\lambda = \{ g \in G \mid g \lambda g^{-1} \sim \lambda\}.
\end{equation}
\end{definition}
It is straightforward to check that equivalent one-parameter subgroups give the same corresponding parabolic subgroups. One also shows that, for a maximal torus $H \subset G$, no two one-parameter subgroups in $\Lambda^\vee(H)$ are equivalent. Moreover, if a one-parameter subgroup $\lambda \in \Lambda^\vee(H)$ lies in the relative interior of a face of a Weyl chamber, the parabolic subgroup $P_\lambda$ is exactly the parabolic subgroup corresponding to this face. Putting these facts together, we have the following (\cite[Corollary 1.11]{Kaveh-Manon}).
\begin{prop}  \label{prop-tilde-B-one-para}
The set $\tilde{\mathfrak{B}}_\Z(G)$ can naturally be identified with the set of equivalence classes of one-parameter subgroups of $G$.
\end{prop}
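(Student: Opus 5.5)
The plan is to write down an explicit map from the glued lattice $\tilde{\mathfrak{B}}_\Z(G)$ to the set of equivalence classes of one-parameter subgroups, and to check that it is well defined, surjective and injective. Throughout I use the facts recalled just before the statement:
\begin{itemize}
\item equivalent one-parameter subgroups have the same parabolic $P_\lambda$;
\item distinct elements of a single $\Lambda^\vee(H)$ are never equivalent;
\item $P_\lambda$ is the parabolic of the face of a Weyl chamber containing $\lambda$ in its relative interior;
\item the description \eqref{equ-P-lambda}, namely $P_\lambda=\{g\mid g\lambda g^{-1}\sim\lambda\}$.
\end{itemize}

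First I make the gluing precise. A point $\lambda\in\Lambda^\vee(H)$ lying in the relative interior of a face with parabolic $P$ is identified with $\lambda'\in\Lambda^\vee(H')$ exactly when $H'\subset P$ and $\lambda'=p\lambda p^{-1}$ for some $p\in P$ with $pHp^{-1}=H'$. Such $p$ exists because $H$ and $H'$ are maximal tori of the connected group $P$ and hence $P$-conjugate. This is the standard way two apartments containing a common face are identified along it.

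With this, define the map $\Lambda^\vee(H)\ni\lambda\mapsto[\lambda]$. It is well defined on the glued space: if $\lambda'=p\lambda p^{-1}$ with $p\in P=P_\lambda$, then \eqref{equ-P-lambda} gives $\lambda'\sim\lambda$. Surjectivity is immediate, since the image of any $\lambda:\mathbb{G}_m\to G$ is a torus and so lies in some maximal torus $H$. Hence $\lambda\in\Lambda^\vee(H)$ for some $H$.

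For injectivity, let $\lambda\in\Lambda^\vee(H)$ and $\mu\in\Lambda^\vee(H')$ with $\lambda\sim\mu$. Then $P:=P_\lambda=P_\mu$, and this group contains both $H$ and $H'$. Choose $p\in P$ with $pH'p^{-1}=H$. Since $p\in P_\mu$, \eqref{equ-P-lambda} gives $p\mu p^{-1}\sim\mu\sim\lambda$. Now $p\mu p^{-1}$ and $\lambda$ both lie in $\Lambda^\vee(H)$, so the no-two-equivalent fact forces $p\mu p^{-1}=\lambda$. Conjugation by $p$ preserves the parabolic attached to a one-parameter subgroup, because $P_{p\mu p^{-1}}=pP_\mu p^{-1}=P$. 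So $\mu$ lies in the face of $\Lambda^\vee(H')$ with parabolic $P$, and it is identified with $\lambda$ under the gluing. Therefore $\lambda$ and $\mu$ define the same point of $\tilde{\mathfrak{B}}_\Z(G)$.

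The main obstacle is the bookkeeping of the gluing. One has to confirm that the identification of faces by $P$-conjugation agrees with the gluing ``along common faces of Weyl chambers'' in Definition~\ref{def-ext-bldg}. One also has to check that it does not depend on the choice of $p$. For the latter I would note that two choices of $p$ differ by an element of $N_P(H)$. Any such element acts on $\Lambda^\vee(H)$ through $W_P$, the Weyl group of a Levi factor of $P$. This group fixes the face of $P$ pointwise, so the identification is independent of $p$ on the points that matter.
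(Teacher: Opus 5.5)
Your proposal is correct and is essentially the paper's argument, written out in full. The paper cites Kaveh--Manon, Corollary 1.11, and gets the identification by combining the facts you list: equivalent one-parameter subgroups have the same $P_\lambda$, distinct elements of a single $\Lambda^\vee(H)$ are inequivalent, and $P_\lambda$ is the parabolic of the face containing $\lambda$ in its relative interior. Your injectivity step via $P$-conjugacy of maximal tori is the same reasoning the paper later writes out in Lemma~\ref{lem-cont-tori}.
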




A homomorphism of linear algebraic groups induces a map between the corresponding extended Tits buildings. The above realization of the extended Tits building in terms of equivalence classes of one-parameter subgroups gives an easy way to construct this map.  
Let $\alpha: G\to G'$ be a homomorphism of linear algebraic groups. If $\lambda:\mathbb{G}_m\to G$ is a one-parameter subgroup of $G$, then $\alpha\circ\lambda$ is a one-parameter subgroup of $G'$. The map $\lambda \mapsto \alpha\circ\lambda$ respects the equivalence classes and thus gives a well-defined map $\hat{\alpha}: \tilde{\mathfrak{B}}_\Z(G)\to\tilde{\mathfrak{B}}_\Z(G')$. This extends to a map $\hat{\alpha}: \tilde{\B}(G) \to \tilde{\B}(G')$.
Because the image of a torus in $G$ is a torus in $G'$ and every torus lies in a maximal torus the map $\hat{\alpha}$ sends an extended apartment for $G$ to an extended apartment for $G'$.  

\section{Background on toric principal bundles}
In this section we review the classification of (framed) toric principal bundles in \cite{Kaveh-Manon}. Let $T \cong \mathbb{G}_m^n$ denote an $n$-dimensional algebraic torus over an algebraically closed field $\k$. We let $M$ and $N$ denote its character and cocharacter lattices respectively. We also denote by $M_\R$ and $N_\R$ the $\R$-vector spaces spanned by $M$ and $N$. 
Let $\Sigma$ be a (finite rational polyhedral) fan in $N_\R$ and let $X_\Sigma$ be the corresponding toric variety. Also $U_\sigma$ denotes the invariant affine open subset in $X_\Sigma$ corresponding to a cone $\sigma \in \Sigma$. We denote the support of $\Sigma$, that is the union of all the cones in $\Sigma$, by $|\Sigma|$. For each $i$, $\Sigma(i)$ denotes the subset of $i$-dimensional cones in $\Sigma$. In particular, $\Sigma(1)$ is the set of rays in $\Sigma$. For each ray $\rho \in \Sigma(1)$ we let $v_\rho$ be the shortest non-zero integral vector along $\rho$, i.e. $v_\rho$ is the shortest non-zero integral vector on $\rho$.

\subsection{Classification of framed toric principal bundles}
\label{subesc-classification-tpbs}
Throughout we fix a point $x_0$ in the open torus orbit in $X_\Sigma$. It gives an identification of the torus $T$ with the open orbit via $t \mapsto t \cdot x_0$. 

We start by recalling the notion of a principal bundle. Let $G$ be an algebraic group, a \emph{principal $G$-bundle over a variety $X$} is a fiber bundle $\mathcal{P}$ over $X$ with an action of $G$ such that $G$ preserves each fiber and the action is free and transitive. Throughout, we take the action of $G$ on $\mathcal{P}$ to be a \emph{right} action. 

Let $G, G'$ be algebraic groups and $\mathcal{P}$ (respectively $\mathcal{P}'$) be a principal $G$-bundle (respectively $G'$-bundle) over $X$. A \emph{morphism of principal bundles with respect to a homomorphism of algebraic groups $\alpha:G\to G'$} is a bundle map $F:\mathcal{P}\to\mathcal{P}'$
 such that
    \begin{equation*}
        F(z\cdot g)=F(z)\cdot\alpha(g), \ \forall z\in\mathcal{P}, \forall g\in G.
    \end{equation*}
We refer to a morphism between toric principal $G$-bundles, with respect to the identity homomorphism $G \to G$, simply as a \emph{morphism of toric principal $G$-bundles}.

\begin{definition}[Toric principal bundle]   \label{def-tpb}
    Let $X_\Sigma$ be the toric variety associated to a fan $\Sigma$ and $G$ an algebraic group. A \emph{toric principal $G$-bundle over $X_\Sigma$} is a principal $G$-bundle $\mathcal{P}$ together with a torus action lifting that of $X_\Sigma$,
such that the $T$-action and the $G$-action on $\mathcal{P}$ commute. More precisely, 
$\forall t\in T, \forall x\in X_\Sigma, \forall z\in\mathcal{P}_x$ we have:
    \begin{align*}
        t:\mathcal{P}_x&\to\mathcal{P}_{t\cdot x}, \\
        t\cdot (z\cdot g) &= (t\cdot z)\cdot g.
    \end{align*}
    
Recall that we have fixed a point $x_0$ in the open torus orbit in $X_\Sigma$. We call a toric principal $G$-bundle $\mathcal{P}$ together with a choice of a point $p_0\in\mathcal{P}_{x_0}$ a \emph{framed toric principal $G$-bundle}.
\end{definition}

\begin{definition}   \label{def-morphism-tpb}
A \emph{morphism of toric principal bundles}  is a morphism $F$ of principal bundles (with respect to some homomorphism $\alpha$ as above) that is also $T$-equivariant.
    A \emph{morphism of framed principal bundles} $(\mathcal{P}, p_0) \to (\mathcal{P}', p_0')$ is a morphism $F$ that sends $p_0\in\mathcal{P}_{x_0}$ to
    $p_0'\in\mathcal{P}'_{x_0}$.
\end{definition}

The following is the main combinatorial gadget to classify (framed) toric principal bundles. It can be thought of as a generalization of a real-valued piecewise linear function $\varphi: |\Sigma| \to \R$. 
\begin{definition}[Piecewise linear map] \label{def-pwl}
Let $G$ be a linear algebraic group with $\tilde{\B}(G)$, the extended Tits building of $G$. Let $\Sigma$ be a fan in $N_\R$, we say that a map $\Phi:\abs{\Sigma}\to\tilde{\mathfrak{B}}(G)$ is a \emph{piecewise linear map} if:
    \begin{enumerate}
        \item[(a)] For each cone $\sigma\in\Sigma$, $\Phi(\sigma)$ lies in an extended apartment $\tilde{A}_\sigma=\Lambda_\R^\vee(H_\sigma)$.
        \item[(b)] For each cone $\sigma\in\Sigma$, the restriction $\Phi|_\sigma:\sigma\to\tilde{A}_\sigma$ is an $\R$-linear map.
    \end{enumerate}
We say that a piecewise linear map $\Phi$ is \emph{integral} if $\Phi$ sends lattice points to lattice points, i.e. for any $\sigma \in \Sigma$, $\Phi(\sigma \cap N) \subset \Lambda^\vee(H_\sigma)$.
\end{definition}

\begin{definition}[Equivariant triviality]
 We say that a toric principal bundle $\mathcal{P}$ on an affine toric variety $U_\sigma$ is \emph{equivariantly trivial} if there exists a toric principal $G$-bundle isomorphism between $\mathcal{P}$ and $U_\sigma\times G$, where $T$ acts on $U_\sigma \times G$ via an algebraic group homomorphism $\phi_\sigma: T\to G$ by:
\begin{equation*}
        t\cdot(x, g) = (t\cdot x, \phi_\sigma(t)g),\ \forall t\in T, \forall x\in U_\sigma, \forall g\in G.
\end{equation*}
\end{definition}

\begin{definition}[Local equivariant triviality]
Let $\mathcal{P}$ be a toric principal $G$-bundle on a toric variety $X_\Sigma$. We say that $\mathcal{P}$ is \emph{locally equivariantly trivial} if the restriction $\mathcal{P}|_{U_\sigma}$ is equivariantly trivial for any cone $\sigma\in\Sigma$.
\end{definition}

The following gives a classification of locally equivariantly trivial framed toric principal bundles in terms of piecewise linear maps (\cite[Theorem 2.4]{Kaveh-Manon}).

\begin{theorem} \label{th-KM}
Let $G$ be a linear algebraic group over $\mathbf{k}$. 
\begin{itemize}
\item[(a)] There is a one-to-one correspondence between the isomorphism classes of locally equivariantly trivial framed toric principal $G$-bundles $\mathcal{P}$ over $X_\Sigma$ and the integral piecewise linear maps $\Phi:\abs{\Sigma}\to\tilde{\mathfrak{B}}(G)$.
    
\item[(b)] Moreover, let $\alpha:G\to G'$ be a homomorphism of linear algebraic groups. Let $(\mathcal{P}, p_0)$ (respectively $(\mathcal{P}', p'_0)$) be a locally equivariantly trivial framed toric principal $G$-bundle (respectively $G'$-bundle) with corresponding 
    piecewise linear map $\Phi:\abs{\Sigma}\to\tilde{\mathfrak{B}}(G)$ (respectively $\Phi':\abs{\Sigma}\to\tilde{\mathfrak{B}}(G')$). Then there is a morphism of framed toric principal bundles $F:\mathcal{P}\to\mathcal{P}'$ with respect to $\alpha$ if and only if $\Phi' = \hat{\alpha}\circ\Phi$.
\end{itemize}
\end{theorem}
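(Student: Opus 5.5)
We will prove Theorem~\ref{th-KM}, following the standard strategy for Klyachko-type classifications: reduce everything to the open orbit and to the affine charts $U_\sigma$, and read off one-parameter subgroups there. Throughout we use Proposition~\ref{prop-tilde-B-one-para}, so that lattice points of $\tilde{\mathfrak{B}}(G)$ are equivalence classes of one-parameter subgroups.

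\emph{Step 1: from a framed bundle to a map on each cone.} Let $(\mathcal{P},p_0)$ be a framed bundle. Since $T$ acts simply transitively on the open orbit, the map $t\mapsto t\cdot p_0$ gives a $T\times G$-equivariant trivialization $\mathcal{P}|_{T}\cong T\times G$, with $T$ acting trivially on the $G$-factor. Fix a cone $\sigma$ and choose an equivariant trivialization $\mathcal{P}|_{U_\sigma}\cong U_\sigma\times G$ with homomorphism $\phi_\sigma:T\to G$. Comparing the two trivializations over $T$ gives a transition map $t\mapsto g_\sigma(t)$. Equivariance forces
\[
g_\sigma(t)=\phi_\sigma(t)\,h_\sigma
\]
for a fixed $h_\sigma\in G$ determined by $p_0$. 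Conjugating by $h_\sigma$, we may replace $\phi_\sigma$ by $\tilde\phi_\sigma=h_\sigma^{-1}\phi_\sigma h_\sigma$; its image lies in some maximal torus $H_\sigma$. For $v\in\sigma\cap N$ we define $\Phi(v)$ to be the class of the one-parameter subgroup $\tilde\phi_\sigma\circ v$. This map is linear on $\sigma\cap N$ because $\tilde\phi_\sigma$ is a homomorphism, and we extend it $\R$-linearly.

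\emph{Step 2: well-definedness.} Changing the trivialization on $U_\sigma$ multiplies by a $T$-equivariant map $U_\sigma\to G$. Requiring such a map to extend from $T$ to $U_\sigma$ means exactly that certain limits $\lim_{s\to0}$ along $v\in\sigma$ exist. This is precisely the equivalence relation $\lambda_1\sim\lambda_2$, so $\Phi|_\sigma$ is independent of choices as a map into $\tilde{\mathfrak{B}}(G)$. Compatibility on $\sigma\cap\sigma'$ holds for the same reason. Integrality is automatic.

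\emph{Step 3: inverse construction.} Given an integral piecewise linear map $\Phi$, pick on each $\sigma$ a homomorphism $\phi_\sigma:T\to H_\sigma$ realizing $\Phi|_\sigma$; this is possible because $\sigma$ spans a saturated sublattice after refinement, and we extend arbitrarily otherwise. Glue the charts $U_\sigma\times G$ using transition functions $t\mapsto\phi_\sigma(t)\phi_{\sigma'}(t)^{-1}$ on the open orbit. The key point is that these functions extend regularly over $U_{\sigma\cap\sigma'}$ precisely because $\phi_\sigma\circ v\sim\phi_{\sigma'}\circ v$ for all $v\in\sigma\cap\sigma'$. Proving this extension requires a lemma: a $T$-equivariant map $T\to G$ extends to $U_\tau$ if and only if its limits exist along the lattice points of $\tau$. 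We prove it using coordinates on $G\subset\GL$ and the fact that $U_\tau$ is normal, with coordinate ring cut out by the semigroup $\tau^\vee\cap M$.

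\emph{Main obstacle.} We expect the hardest step to be this extension lemma, together with the bijectivity check: that isomorphic framed bundles give the same $\Phi$, and that the two constructions are mutually inverse.

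\emph{Part (b).} Given a framed morphism $F$, composing the trivializations with $\alpha$ gives $\Phi'=\hat\alpha\circ\Phi$ cone by cone. Conversely, if $\Phi'=\hat\alpha\circ\Phi$, define $F$ on each chart by $(x,g)\mapsto(x,\alpha(g))$, corrected by the equivariant transition maps. These corrections exist by the extension lemma, and the framing pins $F$ down.
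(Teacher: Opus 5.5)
The paper does not prove this theorem itself but imports it from Kaveh--Manon. Your outline (normalize each local equivariant trivialization using the frame, read off $\Phi(v)$ as the class of $\tilde\phi_\sigma\circ v$, and use the criterion that $t\mapsto\phi_1(t)\phi_2(t)^{-1}$ extends over $U_\tau$ if and only if $\phi_1\circ v\sim\phi_2\circ v$ for all $v\in\tau\cap N$, both to glue and to build morphisms) is correct and is essentially the chart-by-chart argument used there.

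The only slip is the aside about ``refinement'' in Step 3, which is unnecessary and would in fact change $X_\Sigma$. The set $\sigma\cap N$ already generates the saturated sublattice $\operatorname{span}(\sigma)\cap N$, which is a direct summand of $N$, so $\Phi|_{\sigma\cap N}$ extends directly to a homomorphism $N\to\Lambda^\vee(H_\sigma)$; your extension lemma then shows that the choice of extension does not change the framed isomorphism class.
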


It is shown in \cite[Theorem 4.1]{Dey} that if $G$ is reductive then any toric principal $G$-bundle is locally equivariantly trivial. Thus Theorem \ref{th-KM} immediately implies the following.

\begin{corollary} \label{cor-KM} Let $G$ be a reductive algebraic group over $\mathbf{k}$. 
\begin{itemize}
\item[(a)] There is a one-to-one correspondence between the isomorphism classes of framed toric principal $G$-bundles $\mathcal{P}$ over $X_\Sigma$ and the integral piecewise linear maps $\Phi:\abs{\Sigma}\to\tilde{\mathfrak{B}}(G)$.
    
\item[(b)] Moreover, let $\alpha:G\to G'$ be a homomorphism of reductive algebraic groups. Let $\mathcal{P}$ (respectively $\mathcal{P}'$) be a framed toric principal $G$-bundle (respectively $G'$-bundle) with corresponding 
    piecewise linear map $\Phi:\abs{\Sigma}\to\tilde{\mathfrak{B}}(G)$ (respectively $\Phi':\abs{\Sigma}\to\tilde{\mathfrak{B}}(G')$). Then there is a morphism of framed toric principal bundles $F:\mathcal{P}\to\mathcal{P}'$ with respect to $\alpha$ if and only if $\Phi' = \hat{\alpha}\circ\Phi$.
\end{itemize}
\end{corollary}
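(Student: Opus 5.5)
The plan is to deduce both parts directly from Theorem \ref{th-KM}. The only extra input is the result of \cite[Theorem 4.1]{Dey}: for $G$ reductive, every toric principal $G$-bundle over $X_\Sigma$ is locally equivariantly trivial. Its effect is that, over a reductive group, the hypothesis ``locally equivariantly trivial'' in Theorem \ref{th-KM} is automatically satisfied.

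For part (a), I would first check that local equivariant triviality is a property of isomorphism classes. Suppose $F:\mathcal{P}\to\mathcal{P}'$ is an isomorphism of toric principal $G$-bundles (a $T$-equivariant, $G$-equivariant bundle isomorphism) and $\mathcal{P}|_{U_\sigma}\cong U_\sigma\times G$ equivariantly, with $T$ acting through some $\phi_\sigma$. Composing this trivialization with $F^{-1}|_{U_\sigma}$ gives an equivariant trivialization of $\mathcal{P}'|_{U_\sigma}$ with the same $\phi_\sigma$. A framing plays no role here.

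Hence the isomorphism classes of locally equivariantly trivial framed toric principal $G$-bundles are a union of isomorphism classes of framed toric principal $G$-bundles. By \cite[Theorem 4.1]{Dey}, applied to the underlying toric principal bundle of each framed bundle, this union is all of them. So the two sets of isomorphism classes coincide, and Theorem \ref{th-KM}(a) gives the bijection with integral piecewise linear maps $\Phi:\abs{\Sigma}\to\tilde{\mathfrak{B}}(G)$.

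For part (b), let $\alpha:G\to G'$ be a homomorphism of reductive groups and let $(\mathcal{P},p_0)$, $(\mathcal{P}',p_0')$ be framed toric principal bundles for $G$ and $G'$. Applying \cite[Theorem 4.1]{Dey} separately to $G$ and to $G'$ shows that both bundles are locally equivariantly trivial. Their piecewise linear maps $\Phi,\Phi'$ are therefore the ones attached by Theorem \ref{th-KM}(a), that is, by part (a) just proved. Theorem \ref{th-KM}(b) then applies verbatim: a morphism of framed toric principal bundles with respect to $\alpha$ exists if and only if $\Phi'=\hat\alpha\circ\Phi$.

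There is no genuine obstacle, since the content lies in the two cited theorems. The one point to state carefully is that Dey's theorem is needed for both $G$ and $G'$ in part (b), which is why the corollary assumes that $\alpha$ is a homomorphism of reductive groups.
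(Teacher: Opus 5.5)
Your proposal is correct and follows the paper's argument exactly: the corollary is obtained by combining \cite[Theorem 4.1]{Dey}, which says that every toric principal bundle for a reductive group is locally equivariantly trivial, with Theorem \ref{th-KM}. Your extra remarks (that local equivariant triviality is preserved under isomorphism, and that Dey's theorem is applied to both $G$ and $G'$ in part (b)) are details the paper leaves implicit.
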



The following is a simple corollary of Theorem \ref{th-KM}(b).
\begin{lemma}   \label{lem-KM}
    Let $(\mathcal{P},p_0)$ be a locally equivariantly trivial framed toric principal $G$-bundle with corresponding integral piecewise linear map $\Phi:\abs{\Sigma}\to\tilde{\mathfrak{B}}(G)$. Then for any $g_0\in G$, the corresponding
integral piecewise linear map for the framed toric principal $G$-bundle $(\mathcal{P},p_0\cdot g_0)$ is $\hat{\alpha}_{g_0}\circ \Phi$, where $\alpha_{g_0}: G \to G$ is the conjugation homomorphism $x \mapsto g_0^{-1} x g_0$.
\end{lemma}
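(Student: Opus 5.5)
The plan is to apply Theorem \ref{th-KM}(b) twice. First, identify the new framed bundle with the old one via a morphism with respect to $\alpha_{g_0}$. Then compose with the identity isomorphism of principal bundles.

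Consider the identity map of the total space, $F = \mathrm{id}: \mathcal{P} \to \mathcal{P}$. We view $F$ as a map from the framed bundle $(\mathcal{P}, p_0\cdot g_0)$ to the framed bundle $(\mathcal{P}, p_0)$. This does not respect the frames, so instead we use the right translation $F(z) = z\cdot g_0^{-1}$. Then $F$ is a bundle map commuting with the $T$-action, since the $T$- and $G$-actions commute. It sends $p_0\cdot g_0$ to $p_0$. For $g \in G$ we have
\[
F(z\cdot g) = z\cdot g g_0^{-1} = (z\cdot g_0^{-1})\cdot (g_0 g g_0^{-1}) = F(z)\cdot \beta(g),
\]
where $\beta(g) = g_0 g g_0^{-1} = \alpha_{g_0^{-1}}(g)$. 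So $F$ is a morphism of framed toric principal bundles $(\mathcal{P}, p_0\cdot g_0)\to(\mathcal{P},p_0)$ with respect to $\alpha_{g_0^{-1}}$. Theorem \ref{th-KM}(b) then gives $\Phi = \hat{\alpha}_{g_0^{-1}}\circ\Phi'$, where $\Phi'$ is the map for $(\mathcal{P},p_0\cdot g_0)$. Symmetrically, $F^{-1}(z)=z\cdot g_0$ is a framed morphism $(\mathcal{P},p_0)\to(\mathcal{P},p_0\cdot g_0)$ with respect to $\alpha_{g_0}$, because $F^{-1}(z\cdot g) = (z\cdot g_0)\cdot(g_0^{-1} g g_0)$. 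Applying Theorem \ref{th-KM}(b) to $F^{-1}$ gives directly $\Phi' = \hat{\alpha}_{g_0}\circ\Phi$. This is the claim, so really only this second direction is needed.

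The only points to verify carefully are the following. The map $z\mapsto z\cdot g_0$ is an isomorphism of varieties over $X_\Sigma$ preserving fibers, which holds since the $G$-action is fiberwise. It is $T$-equivariant, which follows from the commutation axiom in Definition \ref{def-tpb}. The bundle $(\mathcal{P},p_0\cdot g_0)$ is still locally equivariantly trivial, which is clear since it has the same underlying toric bundle. I expect no real obstacle. The main subtlety is bookkeeping: using the right action convention and getting $g_0^{-1}xg_0$ rather than $g_0xg_0^{-1}$, which the computation above settles.
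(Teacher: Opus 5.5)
Your proposal is correct and is essentially the paper's own proof: right translation $z \mapsto z\cdot g_0$ is a $T$-equivariant morphism of framed toric principal bundles $(\mathcal{P},p_0)\to(\mathcal{P},p_0\cdot g_0)$ with respect to $\alpha_{g_0}$, and Theorem \ref{th-KM}(b) then gives $\Phi' = \hat{\alpha}_{g_0}\circ\Phi$. The preliminary detour through $z\mapsto z\cdot g_0^{-1}$ is harmless but unnecessary.
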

\begin{proof}
The right action by $g_0$ gives a morphism of framed toric principal bundles from $(\mathcal{P}, p_0)$ to $(\mathcal{P}, p_0 \cdot g_0)$ with respect to the conjugation homomorphism $\alpha_{g_0}: G \to G$. Theorem \ref{th-KM}(b) then implies that the piecewise linear map of $(\mathcal{P}, p_0 \cdot g_0)$ is $\hat{\alpha}_{g_0}\circ \Phi$.
\end{proof}

\subsection{Equivariant characteristic classes}
We assume $G$ is a reductive algebraic group. Let $\mathcal{P}$ be a toric principal $G$-bundle over $X_\Sigma$ with torus $T$. Let $BT, BG$ be classifying spaces of $T$ and $G$ with $ET$ and $EG$ the corresponding universal bundles respectively.
\begin{definition}
    If $X$ and $Y$ are left $G$-spaces, and we let $G$ acting on $X\times Y$ on left by $g\cdot(x, y)=(g\cdot x, g\cdot y)$ for $g\in G$, then $X\times_G Y$ is the orbit space $(X\times Y)/G$. 
\end{definition}

Consider $\mathcal{P}_T:=ET\times_T \mathcal{P}$ and $X_T:=ET\times_T X_\Sigma$. The equivariant cohomology $H^*_T(X_\Sigma;\C)$ is the usual cohomology $H^*(X_T;\C)$.
Since $\mathcal{P}_T$ is a principal $G$-bundle over $X_T$, by the universal property, there exists a continuous map $f:X_T\to BG$. It induces a homomorphism $f^*:H^*(BG;\C)\to H^*(X_T;\C)=H_T^*(X_\Sigma;\C)$.
We have $H^*(BG;\C)\simeq\C[\mathfrak{g}]^G$, where $\C[\mathfrak{g}]^G$ consists of polynomial functions on ${\mathfrak{g}}$ which are invariant under the adjoint action of $G$ and $\mathfrak{g}$ is the Lie algebra of $G$.
Let $\mathfrak{h}$ be a Cartan subalgebra of $\mathfrak{g}$ and $W$ the associated Weyl group. The inclusion $\mathfrak{h}\subset\mathfrak{g}$ induces an isomorphism
$\C[\mathfrak{h}]^W\simeq\C[\mathfrak{g}]^G$ by the Chevalley restriction theorem. The equivariant Chern-Weil homomorphism is $f^*:\C[\mathfrak{h}]^W\to H^*_T(X_\Sigma;\C).$

However, $EG$ does not exist as a finite dimensional algebraic variety in general. This can be solved by taking an open subset $U_m$ of a representation of $G$ such that $G$ acts freely on $U_m$ and the codimension $m$ of the complement of $U$ is sufficiently large (\cite[Theorem 1.1]{Totaro}).
Following \cite{Edidin}, the $i$-th $G$-equivariant Chow group of $X$ is defined to be $A^i_G(X)=A^i(X\times_G U_m)$, where $i\le m$.

Fix a maximal torus $H\subset G$ and let $W$ be the Weyl group of $(G,H)$. Let $S(H)_\R$ be the $\R$-algebra generated by the character lattice of $H$. \cite[Theorem 1]{Edidin2} shows that the $G$-equivariant Chow cohomology ring $A_G^*(\text{pt};\R)$ of a point is naturally isomorphic to the $W$-invariant $\R$-algebra $S(H)^W_\R$.

From \cite[Lemma 1.6]{Totaro}, there exists an affine space bundle $\pi:X'\to X_\Sigma$ and a map $f:X'\to B_mG$ such that the pullbacks $\pi^*\mathcal{P}$ and $f^* U_m$ are isomorphic, where $B_mG = U_m/ G$. Therefore, the Chern-Weil homomorphism, for $i<m$, is $S(H)^W_\R\to A^i(X';\R)\simeq A^i(X;\R)$.

For a complete toric variety $X_\Sigma$, the equivariant Chow cohomology ring $A_T^*(X_\Sigma;\R)$ is naturally isomorphic to the algebra of piecewise polynomial functions on $N_\R$ with respect to $\Sigma$ (\cite{Sam}). The isomorphism is given by the localization map:
$$A_T^*(X_\Sigma;\R)\to\bigoplus_{\sigma\in\Sigma(n)}A_T^*(\{x_\sigma\};\R)\simeq\bigoplus_{\sigma\in\Sigma(n)}S(T)_\R,$$
where $x_\sigma \in X_\Sigma$ is the $T$-fixed point corresponding to a full dimensional cone $\sigma$. Let $q \in S(H)^W_\R$ be a $W$-invariant polynomial. We recall from \cite[Section 3]{Kaveh-Manon} that $q$ can be extended to a well-defined function $\tilde{q}:\tilde{\mathfrak{B}}(G)\to\R$. The following is from \cite[Theorem 3.4]{Kaveh-Manon}.
\begin{theorem} \label{th-char-class}
    Let $G$ be a reductive algebraic group over $\mathbf{k}$. Let $\mathcal{P}$ be a framed toric principal $G$-bundle on a complete toric variety $X_\Sigma$ with the corresponding piecewise linear map $\Phi:\abs{\Sigma}\to\tilde{\mathfrak{B}}(G)$. Let $q\in S(H)_\R^W$
    be a $W$-invariant polynomial. Then the image of $q$ under the equivariant Chern-Weil homomorphism is given by the piecewise polynomial function $\tilde{q}\circ\Phi$.
\end{theorem}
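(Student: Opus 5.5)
Theorem \ref{th-char-class} is the last statement, so the plan is to prove that the equivariant Chern--Weil image of $q$ equals $\tilde q\circ\Phi$. The proof will be local: reduce to the $T$-fixed points of $X_\Sigma$, using the localization isomorphism $A^*_T(X_\Sigma;\R)\hookrightarrow\bigoplus_{\sigma\in\Sigma(n)}S(T)_\R$. Since $\Sigma$ is complete and $X_\Sigma$ is a complete toric variety, this map is injective with image the piecewise polynomials. It therefore suffices to show, for each maximal cone $\sigma$, that the restriction of the equivariant characteristic class $f^*q$ to $x_\sigma$ equals the polynomial $(\tilde q\circ\Phi)|_\sigma$ on $N_\R$.

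First, by Corollary \ref{cor-KM} (local equivariant triviality, from \cite{Dey}), $\mathcal{P}|_{U_\sigma}\cong U_\sigma\times G$ with $T$ acting through a homomorphism $\phi_\sigma:T\to G$. We may choose $\phi_\sigma$ with image in a maximal torus $H_\sigma$. From the construction in \cite{Kaveh-Manon}, $\Phi|_\sigma$ is the linear map $N\to\Lambda^\vee(H_\sigma)$, $v\mapsto\phi_\sigma\circ v$, extended $\R$-linearly, possibly up to the framing conjugation of Lemma \ref{lem-KM}. Conjugation does not affect $\tilde q$, since $q$ is $W$-invariant and $\tilde q$ is defined apartment-wise via conjugation to $H$.

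Second, restrict to the fixed point $x_\sigma$. The fibre $\mathcal{P}_{x_\sigma}\cong G$ is a $T$-equivariant principal $G$-bundle over a point, with $T$ acting by left multiplication through $\phi_\sigma$. Its equivariant Chern--Weil homomorphism $A^*_G(\mathrm{pt})\to A^*_T(\mathrm{pt})$ is the pullback along $B\phi_\sigma:BT\to BG$. Using \cite{Edidin2}, this factors as
\[
S(H)^W_\R\cong S(H_\sigma)^W_\R\hookrightarrow S(H_\sigma)_\R\xrightarrow{\phi_\sigma^*} S(T)_\R .
\]
Under $S(T)_\R=\R[N_\R]$, the element $\phi_\sigma^*q$ is the polynomial $v\mapsto q(\phi_\sigma\circ v)$, which is $q\circ\Phi|_\sigma=\tilde q\circ\Phi|_\sigma$. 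Naturality of the Chern--Weil map under pullback to $x_\sigma$ (functoriality of Totaro's approximations $U_m$) gives the local equality. Injectivity of localization then finishes the proof.

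The main obstacle is the second step: rigorously identifying the Chern--Weil map of the bundle $G\to\mathrm{pt}$ with $T$ acting via $\phi_\sigma$ with the pullback $\phi_\sigma^*$ on $W$-invariants. This needs care with the finite-dimensional approximations of $BG$ and $BT$. I would handle it by writing $ET\times_T G\to BT$ as the bundle induced from $ET$ along $\phi_\sigma$, whose classifying map is $B\phi_\sigma$, and then invoking the compatibility $A^*_G\to A^*_{H}\to A^*_T$ from \cite{Edidin2}.
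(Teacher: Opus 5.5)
The paper contains no proof of this statement. It is quoted from \cite[Theorem 3.4]{Kaveh-Manon}, so there is no in-paper argument to compare against. Your proof is correct and is the natural fixed-point argument. It combines three steps: injectivity of localization to the fixed points $x_\sigma$ for complete $\Sigma$; the equivariant trivialization of $\mathcal{P}|_{U_\sigma}$ through $\phi_\sigma\colon T\to H_\sigma$, which gives $\Phi|_\sigma(v)=[\phi_\sigma\circ v]$ up to a framing conjugation that $\tilde q$ does not see; and the computation of the Chern--Weil map of the point bundle $\mathcal{P}_{x_\sigma}$ as $S(H)^W_\R\cong S(H_\sigma)^{W_\sigma}_\R\hookrightarrow S(H_\sigma)_\R\xrightarrow{\phi_\sigma^*}S(T)_\R$ via the classifying map $B\phi_\sigma$.

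The one thing to keep straight is conventions. The left action through $\phi_\sigma$, the normalization $\Phi|_\sigma(v)=[\phi_\sigma\circ v]$, and the classifying map being $B\phi_\sigma$ rather than $B(\phi_\sigma^{-1})$ must all match those of \cite{Kaveh-Manon}. A mismatch would change the answer by $(-1)^d$ on homogeneous $q$ of degree $d$.
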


\section{Background on moduli spaces}
In this short section we briefly recall the definitions of fine and coarse moduli spaces. Let $\Sch$ denote the category of schemes (of finite type over $\Spec(\Z)$).
\begin{definition}
    Let $\mathcal{A}$ be a class of objects in the category $\Sch$. Then for any $B\in\Sch$, a \emph{family} of $\mathcal{A}$-objects over $B$ is an object $X\in\Sch$ with a surjective morphism $\pi:X\to B$ such that $\pi^{-1}(b)\in\mathcal{A}$, $\forall b\in B$.
\end{definition}

\begin{definition}
    A \emph{moduli problem} for a class $\mathcal{A}$ of objects in the category $\Sch$ is a contravariant functor:
    $$\mathcal{F}_\mathcal{A}:\Sch\to \textbf{Sets}$$ that associates to each $B\in\Sch$, the set of isomorphism classes of families of $\mathcal{A}$-objects over $B$.
    To each morphism $f:B'\to B$ is associated the set map sending a family $X\to B$ to the pullback family $f^*(X)\to B'$.
\end{definition}

For any object $\mathcal{M}\in\Sch$, there is a contravariant functor
\begin{equation*}
    \Hom_\Sch(-,\mathcal{M}):\Sch\to\textbf{Sets},\ \ X\mapsto\Hom_\Sch(X,\mathcal{M}).
\end{equation*}

\begin{definition}
    An object $\mathcal{M}\in\Sch$ is called a \emph{fine moduli space} for the moduli problem $\mathcal{F}_\mathcal{A}$ if $\Hom_\Sch(-,\mathcal{M})\simeq\mathcal{F}_\mathcal{A}$.
\end{definition}

\begin{definition}
    Let $\mathcal{F}_\mathcal{A}$ be a moduli functor for a class $\mathcal{A}$ of objects in the category $\Sch$. An object $\mathcal{M}\in\Sch$ together with a natural transformation $\eta:\mathcal{F}_\mathcal{A}\to\Hom_\Sch(-,\mathcal{M})$ is a
    \emph{coarse moduli space} for $\mathcal{F}_\mathcal{A}$ if:
    \begin{enumerate}
        \item $\eta$ evaluated at a point is a set bijection
        \item For any other object $\mathcal{M}'\in\Sch$ and natural transformation $\eta':\mathcal{F}_\mathcal{A}\to\Hom_\Sch(-,\mathcal{M}')$, there is a unique morphism from $\mathcal{M}$ to $\mathcal{M}'$ such that the associated natural transformation diagram commutes.
    \end{enumerate}
\end{definition}

\section{Construction of the moduli of framed toric principal bundles}
\label{sec-moduli-tpbs}
With notation as before, let $G$ be a reductive algebraic group over $\k$ and let $X_\Sigma$ be a complete toric variety. In this section, we give a construction of a coarse moduli space of framed toric principal $G$-bundles over $X_\Sigma$ with given total equivariant characteristic class. We believe that this moduli space is indeed a fine moduli space (Section \ref{sec-fine-moduli}).

For the remainder of the paper, we fix a maximal torus $H \subset G$ and a Borel subgroup $B \subset G$ containing $H$.

Let $(\mathcal{P},p_0)$ be a framed toric principal $G$-bundle over $X_\Sigma$ with corresponding integral piecewise linear map $\Phi:\abs{\Sigma}\to\tilde{\mathfrak{B}}(G)$. For any $\rho\in\Sigma(1)$, let $Q_\rho$ denote the standard parabolic subgroup that is conjugate to the parabolic subgroup $P_\rho$, the parabolic subgroup
associated to the equivalence class of one-parameter subgroups $\Phi(v_\rho)$.
Since the normalizer of a parabolic subgroup is itself, there exists a unique coset $x_\rho Q_\rho\in G/Q_\rho$ such that $P_\rho = x_\rho Q_\rho x_\rho^{-1}$.

\begin{lemma}\label{lem-cont-tori}
    Let $\lambda:\mathbb{G}_m\to G$ be a one-parameter subgroup and $H\subset G$ a maximal torus. Then there exists a $\lambda'\sim\lambda$ with $\lambda'\in\Lambda^\vee(H)$ if and only if $P_\lambda$ contains $H$.
\end{lemma}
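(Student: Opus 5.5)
One direction is immediate and the other rests on a structural fact about parabolics. Throughout, we use that $H$ is a maximal torus and that every parabolic subgroup of $G$ containing $H$ corresponds to a face of some Weyl chamber in $\Lambda^\vee_\R(H)$.

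\emph{($\Rightarrow$).} Suppose $\lambda'\sim\lambda$ with $\lambda'\in\Lambda^\vee(H)$. Equivalent one-parameter subgroups give the same parabolic, so $P_\lambda=P_{\lambda'}$. For $h\in H$ we have $\lambda'(s)h\lambda'(s)^{-1}=h$, because $H$ is commutative and $\lambda'$ lands in $H$. The limit as $s\to 0$ therefore exists, so $H\subset P_{\lambda'}=P_\lambda$.

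\emph{($\Leftarrow$).} Suppose $H\subset P_\lambda$.

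First, the image of $\lambda$ is a torus contained in $P_\lambda$, since $\lambda(t)$ commutes with $\lambda(s)$. Hence $\lambda$ lies in some maximal torus $H'$ of $P_\lambda$. Maximal tori of $P_\lambda$ are maximal tori of $G$ because $P_\lambda$ contains a Borel subgroup.

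Second, $H$ is also a maximal torus of $P_\lambda$. All maximal tori of the connected group $P_\lambda$ are conjugate in $P_\lambda$, so there is $p\in P_\lambda$ with $pH'p^{-1}=H$. Set $\lambda':=p\lambda p^{-1}$; then $\lambda'\in\Lambda^\vee(H)$.

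Third, by the characterization \eqref{equ-P-lambda}, $p\in P_\lambda$ means exactly that $p\lambda p^{-1}\sim\lambda$. So $\lambda'\sim\lambda$, which finishes this direction.

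The step needing the most care is the containment $\lambda(\mathbb{G}_m)\subset P_\lambda$ together with the fact that $P_\lambda$ is connected with maximal tori maximal in $G$. Both follow from $P_\lambda$ being parabolic, which the paper states. If one prefers to avoid \eqref{equ-P-lambda}, the equivalence $\lambda'\sim\lambda$ can be checked directly. Write $p=\ell u$ in the Levi decomposition $P_\lambda=L_\lambda U_\lambda$, where $L_\lambda$ is the centralizer of $\lambda$. Then $\lambda'(s)=\ell u\lambda(s)u^{-1}\ell^{-1}$, and
\[
\lambda'(s)\lambda(s)^{-1}=\ell u\bigl(\lambda(s)u^{-1}\lambda(s)^{-1}\bigr)\ell^{-1}.
\]
Here we used that $\ell$ commutes with $\lambda(s)$. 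As $s\to0$ the bracketed term tends to $1$ because $u^{-1}\in U_\lambda$. So the limit exists and equals $1$.
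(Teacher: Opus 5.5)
Your proof is correct and is essentially the paper's argument: the paper also puts $\lambda$ in a maximal torus of $P_\lambda$ and conjugates that torus to $H$ by some $g\in P_\lambda$. It then checks $\lambda'\sim\lambda$ via $\lambda'(s)\lambda(s)^{-1}=g\bigl(\lambda(s)g^{-1}\lambda(s)^{-1}\bigr)$ with $g^{-1}\in P_\lambda$, which is exactly the content of \eqref{equ-P-lambda} that you cite.

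There is one small slip in your optional Levi-decomposition check: the limit there is $\ell u\ell^{-1}$, not $1$. This does not matter, since only the existence of the limit is needed, and the direct computation above makes the Levi decomposition unnecessary.
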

\begin{proof}
    Suppose there exists a $\lambda'\sim\lambda$ with $\lambda'\in\Lambda^\vee(H)$. For any $h\in H$,
    \begin{equation*}
        \lim_{s\to0}\lambda'(s)h\lambda'(s)^{-1}=\lim_{s\to0}\lambda'(s)\lambda'(s)^{-1}h=h\in G,
    \end{equation*}
    which shows that $h\in P_{\lambda'}$. Therefore, $H\subset P_{\lambda'}=P_{\lambda}$.
Conversely, suppose $H\subset P_\lambda$. We know there exists a maximal torus $\tilde{H}\subset P_\lambda$ such that $\lambda\in\Lambda^\vee(\tilde{H})$. Since all maximal tori in $P_\lambda$ are conjugate, there exists a $g\in P_\lambda$ such that $g\tilde{H}g^{-1}=H$. Let $\lambda^\prime=g\lambda g^{-1}$. We have $\lambda^\prime\in\Lambda^\vee(H)$. Since
    \begin{equation*}
        \lim_{s\to0}\lambda'(s)\lambda(s)^{-1}=\lim_{s\to0}g\lambda(s) g^{-1}\lambda(s)^{-1}=g\lim_{s\to0}\lambda(s) g^{-1}\lambda(s)^{-1},
    \end{equation*} 
    and $g^{-1}\in P_\lambda$, we know $\displaystyle\lim_{s\to0}\lambda'(s)\lambda(s)^{-1}$ exists in $G$, i.e. $\lambda'\sim\lambda$.
\end{proof}

\begin{lemma}\label{lem-fixed-pt}
    A maximal torus $H$ is contained in $x_\rho Q_\rho x_\rho^{-1}$ if and only if $x_\rho Q_\rho\in G/Q_\rho$ is an $H$-fixed point for the left action of $H$ on $G/Q_\rho$.
\end{lemma}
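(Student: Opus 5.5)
This follows directly from unwinding the definition of the left action of $H$ on $G/Q_\rho$. For $h\in H$ we have $h\cdot x_\rho Q_\rho = hx_\rho Q_\rho$. Hence $x_\rho Q_\rho$ is fixed by $h$ if and only if $hx_\rho Q_\rho = x_\rho Q_\rho$. This holds if and only if $x_\rho^{-1} h x_\rho \in Q_\rho$, which in turn holds if and only if $h\in x_\rho Q_\rho x_\rho^{-1}$.

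To see the middle equivalence, note that $hx_\rho Q_\rho = x_\rho Q_\rho$ means $hx_\rho = x_\rho q$ for some $q\in Q_\rho$, that is, $x_\rho^{-1}hx_\rho = q\in Q_\rho$. The converse is obtained by reading the same equation backwards. The last equivalence is just conjugation by $x_\rho$.

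Quantifying over all $h\in H$, the point $x_\rho Q_\rho$ is $H$-fixed if and only if every $h\in H$ lies in $x_\rho Q_\rho x_\rho^{-1}$, i.e.\ if and only if $H\subset x_\rho Q_\rho x_\rho^{-1}=P_\rho$. Both directions come from this single chain of equivalences.

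The only point needing care is that the statement is independent of the choice of representative $x_\rho$ of the coset. Replacing $x_\rho$ by $x_\rho q'$ with $q'\in Q_\rho$ leaves both the coset $x_\rho Q_\rho$ and the conjugate $x_\rho Q_\rho x_\rho^{-1}$ unchanged. The argument therefore has no real obstacle; maximality of $H$ is not even used.
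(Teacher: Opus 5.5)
Your proposal is correct and follows the paper's own argument: the paper proves the lemma by the same chain $Hx_\rho Q_\rho = x_\rho Q_\rho \Leftrightarrow x_\rho^{-1}Hx_\rho \subset Q_\rho \Leftrightarrow H \subset x_\rho Q_\rho x_\rho^{-1}$, which you have simply written out elementwise. Your observations that the statement is independent of the coset representative and that maximality of $H$ is never used are both accurate.
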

\begin{proof}
    \begin{equation*}
        Hx_\rho Q_\rho = x_\rho Q_\rho \Longleftrightarrow x_\rho^{-1} H x_\rho\subset Q_\rho \Longleftrightarrow H\subset x_\rho Q_\rho x_\rho^{-1}.
    \end{equation*}
\end{proof}

Recall that $x_\rho Q_\rho x_\rho^{-1} = P_\rho$, the parabolic subgroup associated to $\Phi(v_\rho)$.
\begin{prop} The tuple
    $(x_\rho Q_\rho\mid\rho\in\Sigma(1))\in\prod_{\rho\in\Sigma(1)}G/Q_\rho$ satisfies the following condition: $\forall\sigma\in\Sigma$, there exists a maximal torus $H_\sigma$ such that $\forall\rho\in\sigma(1)$, $x_\rho Q_\rho$ is an $H_\sigma$-fixed point in $G/Q_\rho$.
\end{prop}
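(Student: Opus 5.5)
The plan is to take $H_\sigma$ to be the maximal torus that Definition \ref{def-pwl}(a) attaches to the cone $\sigma$. Lemma \ref{lem-cont-tori} will turn membership in the extended apartment into containment of $H_\sigma$ in $P_\rho$. Lemma \ref{lem-fixed-pt} will then turn that containment into the fixed-point condition on $G/Q_\rho$.

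Fix $\sigma\in\Sigma$. By Corollary \ref{cor-KM}(a), the framed bundle $(\mathcal{P},p_0)$ corresponds to an integral piecewise linear map $\Phi$. By Definition \ref{def-pwl}(a), there is a maximal torus $H_\sigma$ with $\Phi(\sigma)\subset \tilde{A}_\sigma=\Lambda^\vee_\R(H_\sigma)$. Integrality gives $\Phi(\sigma\cap N)\subset\Lambda^\vee(H_\sigma)$.

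Now let $\rho\in\sigma(1)$. Since $\rho$ is a face of $\sigma$, we have $v_\rho\in\sigma\cap N$. Hence $\Phi(v_\rho)$ is a lattice point in the extended apartment of $H_\sigma$. Under Proposition \ref{prop-tilde-B-one-para}, this lattice point is the equivalence class of a one-parameter subgroup $\lambda'\in\Lambda^\vee(H_\sigma)$. By definition, $P_\rho=P_{\lambda}$ for any representative $\lambda$ of the class $\Phi(v_\rho)$, and we may take $\lambda=\lambda'$. The forward direction of Lemma \ref{lem-cont-tori} (a representative in $\Lambda^\vee(H_\sigma)$ exists, so $H_\sigma\subset P_\lambda$) then gives $H_\sigma\subset P_\rho$.

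Finally, $P_\rho = x_\rho Q_\rho x_\rho^{-1}$ by construction, so $H_\sigma\subset x_\rho Q_\rho x_\rho^{-1}$. Lemma \ref{lem-fixed-pt} then says $x_\rho Q_\rho$ is an $H_\sigma$-fixed point of $G/Q_\rho$. The torus $H_\sigma$ depends only on $\sigma$, so it works simultaneously for all $\rho\in\sigma(1)$, which is the claim.

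The only point I expect to need any care is the identification of $\Phi(v_\rho)$, a point of the glued space $\tilde{\B}_\Z(G)$, with an actual one-parameter subgroup lying in $\Lambda^\vee(H_\sigma)$. This needs the gluing in Proposition \ref{prop-tilde-B-one-para} to be compatible with the embeddings of the lattices $\Lambda^\vee(H)$, which is how $\tilde{\B}_\Z(G)$ was constructed. I would simply cite that compatibility.
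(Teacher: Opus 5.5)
Your proposal is correct and follows the paper's proof exactly: take $H_\sigma$ from Definition \ref{def-pwl}(a), apply Lemma \ref{lem-cont-tori} to get $H_\sigma\subset P_\rho = x_\rho Q_\rho x_\rho^{-1}$ for each $\rho\in\sigma(1)$, and conclude with Lemma \ref{lem-fixed-pt}. Your extra step, using integrality to make $\Phi(v_\rho)$ an actual one-parameter subgroup in $\Lambda^\vee(H_\sigma)$, is a detail the paper leaves implicit.
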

\begin{proof}
    From Definition \ref{def-pwl}, for any $\sigma\in\Sigma$, there exists a maximal torus $H_\sigma\subset G$ such that $\forall\rho\in\sigma(1)$, $\Phi(v_\rho)\in\Lambda^\vee(H_\sigma)$.
    By Lemma \ref{lem-cont-tori}, this means that  $\forall\rho\in\sigma(1)$, $H_\sigma\subset P_\rho=x_\rho Q_\rho x_\rho^{-1}$. By Lemma \ref{lem-fixed-pt}, this is equivalent to $\forall\rho\in\sigma(1)$, $x_\rho Q_\rho$ being an $H_\sigma$-fixed point in $G/Q_\rho$.
\end{proof}

 Now let $\Psi: S^W_\R(H) \to \PP^*(\Sigma)$ be an $\R$-algebra homomorphism where $\PP^*(\Sigma)$ is the algebra of piecewise polynomial functions on $N_\R$ with respect to $\Sigma$. Consider framed toric principal bundles $\mathcal{P}$ with fixed total equivariant characteristic class $\Psi$. Recall that, by Theorem \ref{th-char-class}, a toric principal bundle $\mathcal{P}$ with piecewise linear map $\Phi$ has total equivariant characteristic class $\Psi$ if for any invariant polynomial $q\in S(H)_\R^W$, we have $\tilde{q}\circ\Phi = \Psi(q)$. 

The choice of $\Psi$ naturally gives us a collection of standard parabolic subgroups $\{Q_\rho \mid \rho \in \Sigma(1)\}$ as follows. For each ray $\rho$ we get a function $S^W_\R(H) \to \R$ by $q \mapsto \Psi(q)(v_\rho)$. This function exactly corresponds to a $W$-orbit in $\Lambda^\vee_\R(H)$. We let $Q_\rho$ be the parabolic subgroup corresponding to this $W$-orbit.

\begin{definition}\label{def-comp}
    Fix an $\R$-algebra homomorphism $\Psi: S^W_\R(H) \to \PP^*(\Sigma)$. Let $\{Q_\rho \mid \rho\in\Sigma(1)\}$ be the collection of standard parabolic subgroups determined by the choice of $\Psi$. We say that a point $(x_\rho Q_\rho\mid\rho\in\Sigma(1))\in\prod_{\rho\in\Sigma(1)}G/Q_\rho$ satisfies the \emph{compatibility conditions} with respect to $\Psi$, if $\forall\sigma\in\Sigma$ there exists a maximal torus $H_\sigma$ of $G$ that is contained in $\bigcap_{\rho\in\sigma(1)}x_\rho Q_\rho x_\rho^{-1}$ and there exists a point $(\gamma_\rho \mid \rho\in\sigma(1)) \in\prod_{\rho\in\sigma(1)}\Lambda^\vee(H_\sigma)$ satisfying the following conditions:
    \begin{enumerate}
        \item $\forall\rho\in\sigma(1)$, $P_{\gamma_\rho} = x_\rho Q_\rho x_\rho^{-1}$;
        \item $\forall \rho\in\sigma(1)$, $\forall q\in S(H_\sigma)_\R^{W_\sigma}$, $q(\gamma_\rho)=\Psi(q)(v_\rho)$, where $W_\sigma$ is the Weyl group of $(G, H_\sigma)$;
        \item $\forall \rho\in\sigma(1)$, $\gamma_\rho$ satisfies the same linear relations as the primitive ray generators $v_\rho$.
    \end{enumerate}
\end{definition}

\begin{remark}
    When $\sigma$ is a simplicial cone, the vectors $v_\rho$ are linearly independent. In this case, the condition (3) in Definition \ref{def-comp} is satisfied automatically.
\end{remark}

\begin{theorem}  \label{th-moduli-comp}
The framed toric principal bundles on $X_\Sigma$ with given total equivariant characteristic class $\Psi$ are in one-to-one correspondence with the points in $\prod_{\rho \in \Sigma(1)} G/Q_\rho$ satisfying the compatibility conditions in Definition \ref{def-comp}.    
\end{theorem}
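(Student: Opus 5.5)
By Corollary \ref{cor-KM}(a), framed toric principal $G$-bundles correspond bijectively to integral piecewise linear maps $\Phi:\abs{\Sigma}\to\tilde{\B}(G)$. By Theorem \ref{th-char-class}, the bundle has total equivariant characteristic class $\Psi$ exactly when $\tilde q\circ\Phi=\Psi(q)$ for all $q\in S(H)^W_\R$. So I will construct mutually inverse maps between such $\Phi$ and the compatible tuples $(x_\rho Q_\rho)$.

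\textbf{Forward map.} Given $\Phi$, send it to $(x_\rho Q_\rho)$, where $P_\rho=P_{\Phi(v_\rho)}=x_\rho Q_\rho x_\rho^{-1}$. I must first check that the standard parabolic conjugate to $P_\rho$ is the $Q_\rho$ determined by $\Psi$. Put $\Phi(v_\rho)$ in some $\Lambda^\vee(H')$ and conjugate $H'$ to $H$. The image of $\Phi(v_\rho)$ in $\Lambda^\vee_\R(H)$ lies in the $W$-orbit determined by $q\mapsto\Psi(q)(v_\rho)$, because $\tilde q$ is compatible with conjugation. The parabolic of a point is conjugate to the standard parabolic of the face of the positive chamber that meets its $W$-orbit, which is $Q_\rho$.

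Compatibility then follows.
\begin{enumerate}
\item The preceding Proposition supplies $H_\sigma$.
\item Setting $\gamma_\rho=\Phi(v_\rho)\in\Lambda^\vee(H_\sigma)$, condition (1) holds by definition.
\item Condition (2) is $\tilde q\circ\Phi=\Psi(q)$ evaluated at $v_\rho$. Here I use that $\tilde q$ restricted to $\Lambda^\vee_\R(H_\sigma)$ is the transported $W_\sigma$-invariant polynomial.
\item Condition (3) holds because $\Phi|_\sigma$ is linear.
\end{enumerate}

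\textbf{Backward map.} Given a compatible tuple, for each $\sigma$ choose $H_\sigma$ and $\gamma_\rho$ as in Definition \ref{def-comp}. Condition (3) lets me define $\Phi|_\sigma$ as the unique linear map $\sigma\to\Lambda^\vee_\R(H_\sigma)$ with $v_\rho\mapsto\gamma_\rho$. This map is well defined since the $v_\rho$ span $\sigma$ and the relations are respected. It is integral, since $\sigma\cap N$ is generated over $\Z$ by the $v_\rho$ in the simplicial case; in general I still need to verify this.

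The main obstacle is showing that the $\gamma_\rho$, and hence $\Phi$, are independent of the choices of $H_\sigma$ and $\gamma_\rho$. Without this, the pieces may fail to glue on common faces, and the two maps may fail to be inverse. The key claim is the following:
\begin{quote}
\emph{the class of $\gamma_\rho$ in $\tilde{\B}(G)$ is determined by $P_\rho=x_\rho Q_\rho x_\rho^{-1}$ together with the values $\Psi(q)(v_\rho)$.}
\end{quote}
To prove it, note that condition (1) forces $\gamma_\rho$ to lie in the relative interior of the face of the chamber of $\Lambda^\vee_\R(H_\sigma)$ corresponding to $P_\rho$. Condition (2) fixes its $W_\sigma$-orbit. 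A $W$-orbit meets the relative interior of a given face of a given closed chamber in at most one point, since each orbit meets a closed chamber exactly once. Finally, the corresponding face of the building is the same subset of $\tilde{\B}(G)$ for any maximal torus inside $P_\rho$, by the gluing in Definition \ref{def-ext-bldg}. Hence $\gamma_\rho$ is a well-defined point $\Phi(v_\rho)\in\tilde{\B}(G)$ depending only on $\rho$.

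It follows that on a common face $\tau=\sigma\cap\sigma'$ the linear maps agree, since they agree on generators of $\tau$ and linear maps into different apartments containing those points agree along a shared face. So $\Phi$ is a well-defined integral piecewise linear map.

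\textbf{Checking the correspondence.} Condition (2), extended to $\sigma$ by the same orbit-uniqueness argument, gives $\tilde q\circ\Phi=\Psi(q)$ at the rays. Both sides restricted to $\sigma$ are polynomials pulled back along a linear map, but they are determined by their values on the rays only after a further argument. Here I would use that each $\Psi(q)|_\sigma$ is a polynomial and $\tilde q\circ\Phi|_\sigma=q\circ\Phi|_\sigma$, and I expect to need the hypothesis that $\Psi$ actually arises from some bundle. If it does not, the set is empty and the theorem is vacuous on that side. Finally, both composites are the identity: $P_{\Phi(v_\rho)}=x_\rho Q_\rho x_\rho^{-1}$ recovers the tuple, and the uniqueness claim recovers $\Phi$.
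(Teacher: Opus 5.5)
Your forward map, the key claim that $\gamma_\rho$ is determined by $P_\rho$ and $q\mapsto\Psi(q)(v_\rho)$, and the gluing on common faces are fine. The genuine gap is the step you flag at the end, and it cannot be closed. Definition \ref{def-comp} constrains $\Psi(q)$ only at the ray generators, while $\Psi(q)|_\sigma$ is a polynomial of degree $\deg q$ that these values do not determine. So the backward map need not produce a bundle with class $\Psi$.

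Concretely, take $G=\GL(2)$, $q=x_1x_2$, and a smooth two-dimensional cone $\sigma$ whose rays carry the $W$-orbits of $(a_i,b_i)$ with $a_i>b_i$. Then $Q_{\rho_1}=Q_{\rho_2}=B$, and a tuple is a pair of lines $L_1,L_2\subset\k^2$. Any such pair is fixed by a common maximal torus, and conditions (1)--(3) hold for it. Use coordinates on $\Lambda^\vee(H_\sigma)$ with $L_1$ as first eigenline. Then $\Phi(xv_1+yv_2)$ is $(a_1x+a_2y,\,b_1x+b_2y)$ if $L_1=L_2$, and $(a_1x+b_2y,\,b_1x+a_2y)$ if $L_1\neq L_2$. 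So the $xy$-coefficient of $q\circ\Phi|_\sigma$ is $a_1b_2+a_2b_1$, respectively $a_1a_2+b_1b_2$, and these differ by $(a_1-b_1)(a_2-b_2)\neq 0$.

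Thus on $X_\Sigma=\mathbb{P}^2$, let $\Psi$ be the class of the bundle with $\Phi(v_\rho)=(1,0)$ in a single apartment for all three rays. Every point of $(\mathbb{P}^1)^3$ satisfies Definition \ref{def-comp}, but only the diagonal gives bundles with class $\Psi$. For example, three distinct lines give the tangent bundle of $\mathbb{P}^2$ (or its dual, depending on conventions), with $c_2=3$, while the diagonal gives a split bundle with $c_2=0$. So assuming $\Psi$ is realized does not help.

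Your fallback is also wrong. If $\Psi$ is not realized, compatible tuples can still exist, so the correspondence fails rather than holding vacuously. For instance, alter the $xy$-coefficient of $\Psi(x_1x_2)$ on one cone; this is allowed since $S(H)^W_\R=\R[x_1+x_2,\,x_1x_2]$ is a polynomial ring.

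Your integrality step has a separate error. $\sigma\cap N$ is $\Z$-generated by the $v_\rho$ only when $\sigma$ is smooth, not merely simplicial, and the ray-only conditions do allow non-integral maps. Take $\sigma=\operatorname{cone}\bigl((1,0),(1,2)\bigr)$ with both rays carrying the orbit of $(1,0)$. The configuration $L_1\neq L_2$ gives $\gamma_1=(1,0)$ and $\gamma_2=(0,1)$, and then $(1,1)=\tfrac12v_1+\tfrac12v_2$ is sent to $(\tfrac12,\tfrac12)$.

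What is missing is a stronger compatibility condition:
\begin{enumerate}
\item Replace (2) by the identity $q\circ\Phi_\sigma=\Psi(q)|_\sigma$ on all of $\sigma$, where $\Phi_\sigma$ is the linear extension of $v_\rho\mapsto\gamma_\rho$. For $\GL(r)$ this amounts to fixing the multiset of torus weights at each fixed point.
\item Require $\Phi_\sigma(\sigma\cap N)\subset\Lambda^\vee(H_\sigma)$. This is automatic once $\Psi$ is realized by some $\Phi_0$: then $\Phi_\sigma(v)$ and $\Phi_0(v)$ have the same $W$-invariants, hence lie in one $W$-orbit, and $W$ preserves the cocharacter lattice.
\end{enumerate}
With these conditions your argument goes through as written. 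The paper's own proof is only the one-line appeal to Theorem \ref{th-KM} and does not address this point either.
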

\begin{proof}
This follows from Theorem \ref{th-KM}.    
\end{proof}

\begin{definition}
In light of Theorem \ref{th-moduli-comp}, we denote the collection of $\left(x_\rho Q_\rho\mid\rho\in\Sigma(1)\right)\in\prod_{\rho\in\Sigma(1)}G/ Q_\rho$ satisfying the compatibility conditions with respect to $\Psi$ in Definition \ref{def-comp} by $\mathcal{M}_{G,\Psi}^{\fr}$.
\end{definition}

Next we show that $\mathcal{M}_{G,\Psi}^{\fr}$ is a locally closed subset of $\prod_{\rho \in \Sigma(1)} G/Q_\rho$. We need the following lemma.
\begin{lemma}\label{lem-orbit-map}
Take $\sigma \in \Sigma$. 
    There exists a maximal torus $H_\sigma=gHg^{-1}\subset G$ such that $\displaystyle H_\sigma\subset\bigcap_{\rho\in\sigma(1)}x_\rho Q_\rho x_\rho^{-1}$ if and only if $(x_\rho Q_\rho\mid\rho\in\sigma(1))$ is in the image of the orbit map: $$G\times\prod_{\rho\in\sigma(1)}(G/Q_\rho)^{H}\to\prod_{\rho\in\sigma(1)}G/Q_\rho.$$
\end{lemma}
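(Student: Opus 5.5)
The plan is to reduce both directions to Lemma \ref{lem-fixed-pt}, using that $G$ acts transitively on its maximal tori by conjugation. Here the orbit map is $(g,(y_\rho Q_\rho)_\rho)\mapsto (g y_\rho Q_\rho)_\rho$, with $G$ acting diagonally on the left of each factor.

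Suppose first that $H_\sigma=gHg^{-1}$ is contained in $x_\rho Q_\rho x_\rho^{-1}$ for every $\rho\in\sigma(1)$. Set $y_\rho=g^{-1}x_\rho$. Then
$$y_\rho Q_\rho y_\rho^{-1} = g^{-1}x_\rho Q_\rho x_\rho^{-1} g \supset g^{-1}H_\sigma g = H.$$
By Lemma \ref{lem-fixed-pt}, applied to the maximal torus $H$ and the coset $y_\rho Q_\rho$, each $y_\rho Q_\rho$ lies in $(G/Q_\rho)^H$. The orbit map sends $(g,(y_\rho Q_\rho)_\rho)$ to $(g y_\rho Q_\rho)_\rho=(x_\rho Q_\rho)_\rho$, so the tuple is in the image.

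Conversely, suppose $(x_\rho Q_\rho)_\rho=(g y_\rho Q_\rho)_\rho$ with each $y_\rho Q_\rho\in(G/Q_\rho)^H$. Lemma \ref{lem-fixed-pt} gives $H\subset y_\rho Q_\rho y_\rho^{-1}$. Since $x_\rho Q_\rho=g y_\rho Q_\rho$, we have $x_\rho=g y_\rho q$ for some $q\in Q_\rho$. Hence
$$x_\rho Q_\rho x_\rho^{-1}=g y_\rho Q_\rho y_\rho^{-1} g^{-1}\supset gHg^{-1}.$$
So $H_\sigma:=gHg^{-1}$ is contained in $\bigcap_{\rho\in\sigma(1)} x_\rho Q_\rho x_\rho^{-1}$. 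The only care needed is checking that the conjugate $x_\rho Q_\rho x_\rho^{-1}$ does not depend on the coset representative, which holds because $Q_\rho$ normalizes itself.

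The one point to flag is that the statement writes $H_\sigma$ in the form $gHg^{-1}$. This is no restriction, since every maximal torus of $G$ is conjugate to the fixed torus $H$. Apart from that, the argument is purely formal and has no real obstacle. Its purpose is the later application: $(G/Q_\rho)^H$ is a finite set, so the image of the orbit map is a finite union of $G$-orbits in the product, hence constructible, which feeds into proving local closedness.
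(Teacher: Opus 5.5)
Your proof is correct and takes essentially the same approach as the paper: both directions reduce to Lemma \ref{lem-fixed-pt} after translating by $g$. The paper packages this translation as the identity $g\cdot X^H = X^{gHg^{-1}}$, and your substitution $y_\rho = g^{-1}x_\rho$ verifies that identity by hand, element by element.
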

\begin{proof}
One knows that for any $G$-space $X$ we have $g\cdot X^H = X^{gHg^{-1}}$. From Lemma \ref{lem-fixed-pt} we then have:
    \begin{equation*}
        \begin{split}
            &\exists H_\sigma=gHg^{-1}\text{ s.t. } H_\sigma\subset\bigcap_{\rho\in\sigma(1)}x_\rho Q_\rho x_\rho^{-1}\\
            \Longleftrightarrow&\exists H_\sigma=gHg^{-1}\text{ s.t. } (x_\rho Q_\rho\mid\rho\in\sigma(1))\in\prod_{\rho\in\sigma(1)}(G/Q_\rho)^{H_\sigma}\\
            \Longleftrightarrow&\exists g\in G\text{ s.t. } (x_\rho Q_\rho\mid\rho\in\sigma(1))\in\prod_{\rho\in\sigma(1)}g\cdot(G/Q_\rho)^{H}\\
            \Longleftrightarrow&(x_\rho Q_\rho\mid\rho\in\sigma(1)) = g\cdot x,\ x\in\prod_{\rho\in\sigma(1)}(G/Q_\rho)^H.
        \end{split}
    \end{equation*}
\end{proof}

\begin{theorem}
     $\mathcal{M}_{G,\Psi}^{\fr}$ is a locally closed subvariety of $\prod_{\rho\in\Sigma(1)}G/ Q_\rho$.
\end{theorem}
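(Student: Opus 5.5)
The plan is to write $\mathcal{M}_{G,\Psi}^{\fr}$ as a finite intersection, over $\sigma\in\Sigma$, of preimages under the projections $\pi_\sigma:\prod_{\rho\in\Sigma(1)}G/Q_\rho\to\prod_{\rho\in\sigma(1)}G/Q_\rho$ of subsets $\mathcal{M}_\sigma\subset\prod_{\rho\in\sigma(1)}G/Q_\rho$. Here $\mathcal{M}_\sigma$ is the set of tuples satisfying the compatibility conditions of Definition \ref{def-comp} for the single cone $\sigma$. A finite intersection of locally closed sets is locally closed, and preimages of locally closed sets under morphisms are locally closed. So it suffices to show that each $\mathcal{M}_\sigma$ is locally closed, and in fact I will aim to show it is a finite union of $G$-orbits that is closed in a suitable open set, or directly constructible plus closed-in-its-closure.

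The key step is to rewrite $\mathcal{M}_\sigma$ using Lemma \ref{lem-orbit-map}. Since all maximal tori are conjugate, every $H_\sigma$ has the form $gHg^{-1}$. Conjugating the data by $g$ then reduces conditions (1)--(3) to conditions on a tuple $(y_\rho Q_\rho)\in\prod_{\rho\in\sigma(1)}(G/Q_\rho)^H$ together with cocharacters $\gamma_\rho\in\Lambda^\vee(H)$. Conjugation by $g$ identifies $S(H_\sigma)^{W_\sigma}_\R$ with $S(H)^W_\R$ compatibly with evaluation, so condition (2) transports.

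The fixed point set $(G/Q_\rho)^H$ is finite, in bijection with $W/W_{Q_\rho}$. Moreover, for a fixed $H$-fixed point $wQ_\rho$, the cocharacters $\gamma\in\Lambda^\vee(H)$ with $P_\gamma=wQ_\rho w^{-1}$ and prescribed invariants $q(\gamma)=\Psi(q)(v_\rho)$ form a set of at most one element. Indeed, the invariants determine the $W$-orbit of $\gamma$, and the face $wQ_\rho w^{-1}$ picks out the unique orbit element in the relative interior of the corresponding face of the Weyl chamber $wC$. Thus condition (1)+(2) determines $\gamma_\rho$ uniquely from $y_\rho$, and condition (3) becomes a finite check. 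Hence there is a finite set $F_\sigma\subset\prod_{\rho\in\sigma(1)}(G/Q_\rho)^H$ of admissible fixed tuples, and by Lemma \ref{lem-orbit-map}
\[
\mathcal{M}_\sigma=G\cdot F_\sigma=\bigcup_{y\in F_\sigma}G\cdot y,
\]
a finite union of $G$-orbits. In particular $\mathcal{M}_\sigma$ is constructible, being the image of a morphism by Chevalley.

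The main obstacle is upgrading from constructible to locally closed. A single $G$-orbit is locally closed (it is open in its closure), but a finite union of orbits need not be. My first attempt is to show that distinct orbits in $\mathcal{M}_\sigma$ do not specialize to one another, i.e.\ that no orbit $G\cdot y$ with $y\in F_\sigma$ lies in the closure of another $G\cdot y'$ with $y'\in F_\sigma$. Granting this, each orbit is closed in $\mathcal{M}_\sigma$, so $\mathcal{M}_\sigma$ is the disjoint union of the sets $\overline{G\cdot y}\setminus\partial(G\cdot y)$. That is open in $\bigcup_y \overline{G\cdot y}$ after removing the finitely many boundary orbits not in $\mathcal{M}_\sigma$, giving local closedness.

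To prove the non-specialization, I would use the relative position: $G$-orbits on $\prod G/Q_\rho$ of $H$-fixed tuples are classified by the relative positions of the parabolics $y_\rho Q_\rho y_\rho^{-1}$. Degenerating a tuple strictly decreases the dimension of the orbit. On the other hand, the compatibility data for $\sigma$ (the linear map $\Phi|_\sigma$ with fixed values on rays, hence a fixed configuration of points $\gamma_\rho$ in one apartment up to $W$) determines the relative positions up to the finite ambiguity. One expects that the orbits in $F_\sigma$ all have the same dimension, namely that of the $G$-orbit of a generic point with the given stabilizer $\bigcap_\rho P_{\gamma_\rho}$. If that dimension count fails, I would fall back on the weaker statement that the boundary of the constructible set is handled by intersecting over all $\sigma$ simultaneously.
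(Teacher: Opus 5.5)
Your reduction is the paper's: $\mathcal{M}_{G,\Psi}^{\fr}=\bigcap_{\sigma}\pi_\sigma^{-1}(\mathcal{M}_\sigma)$ with $\mathcal{M}_\sigma=G\cdot F_\sigma$ for a finite set $F_\sigma$ of $H$-fixed tuples. You are also right that (1) and (2) determine $\gamma_\rho$ uniquely from an $H$-fixed point. The paper finishes by simply asserting that the $G$-orbit of a finite set is locally closed. You correctly flag that a finite union of orbits need not be, but the step you substitute is false. Conditions (1)--(3) constrain each $\gamma_\rho$ only within its own $W$-orbit, not the relative position of the $\gamma_\rho$'s. For simplicial $\sigma$ there is no joint constraint at all, and $F_\sigma=\prod_{\rho\in\sigma(1)}(G/Q_\rho)^H$. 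Already for $G=\GL(2)$ and a two-dimensional cone whose rays both have orbit $W\cdot(1,0)$, you get $\mathcal{M}_\sigma=\mathbb{P}^1\times\mathbb{P}^1$. This is the union of the diagonal orbit (dimension $1$) and its complement (dimension $2$), and the first lies in the closure of the second. So neither non-specialization nor equality of orbit dimensions holds; here $\mathcal{M}_\sigma$ happens to be closed, but not for your reason.

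Worse, with Definition \ref{def-comp} as written no argument can work, and intersecting over all cones does not help. Take $G=\GL(3)$ and $X_\Sigma=\mathbb{P}^3$ with rays $v_0,\dots,v_3$. Let every $\Psi_\rho$ be the $W$-orbit of $(1,0,0)$, e.g.\ from the frame bundle of $T_{\mathbb{P}^3}$ or its dual, so that $G/Q_\rho\cong\mathbb{P}^2$ for all $\rho$. Three points of $\mathbb{P}^2$ are fixed by a common maximal torus iff they are not three distinct collinear points, and all cones are simplicial. Hence $\mathcal{M}_{G,\Psi}^{\fr}$ is the set of $(a_0,\dots,a_3)\in(\mathbb{P}^2)^4$ no three of whose entries are distinct and collinear. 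This set is dense and contains $(a,a,c,d)$ with $a,c,d$ non-collinear. Yet $(a,b,c,d)$, with $b\neq a,c$ on the line through $a$ and $c$, is not in it, for $b$ arbitrarily close to $a$. So the set is not open in its closure, i.e.\ not locally closed.

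The missing idea is to replace (2) by the full condition $\tilde q\circ\Phi|_\sigma=\Psi(q)|_\sigma$ on all of $\sigma$, which is what having class $\Psi$ means by Theorem \ref{th-char-class}. Let $L,L'$ be the linear maps on $\sigma$ attached to two admissible $H$-fixed tuples. Then $L'(v)\in W\cdot L(v)$ for every $v\in\sigma$, because invariants of a finite group separate orbits. So $\sigma\subset\bigcup_{w\in W}\ker(L'-wL)$, and since $\sigma$ has nonempty interior in its span, $L'=wL$ for a single $w$. Thus the admissible tuples form one $W$-orbit, and $\mathcal{M}_\sigma$ is a single $G$-orbit, hence locally closed. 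Your intersection argument then goes through.
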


\begin{proof}
    For any $\sigma\in\Sigma$, let $\pi_\sigma$ be the projection map from $\prod_{\rho\in\Sigma(1)}G/ Q_\rho$ to $\prod_{\rho\in\sigma(1)}G/ Q_\rho$.
    We have \[\pi_\sigma(\mathcal{M}_{G,\Psi}^{\fr}) \subset G\cdot (\prod_{\rho\in\sigma(1)}(G/ Q_\rho)^H).\]
    Note that $\pi_\sigma(\mathcal{M}_{G,\Psi}^{\fr})$ is $G$-invariant and hence is a union of $G$-orbits.
    The condition (1) in Definition \ref{def-comp}, specifies which face of a Weyl chamber $\gamma_\rho$ lies on. The condition (2) specifies which Weyl group orbit $\gamma_\rho$ lies on, and the condition (3) imposes possible further restrictions. Since $(G/ Q_\rho)^H$ is finite, the conditions (1), (2) and (3) state that for a certain subset (possibly empty) $S_\sigma \subset \prod_{\rho \in \sigma(1)} (G/ Q_\rho)^H$ we have \[\pi_\sigma(\mathcal{M}_{G,\Psi}^{\fr})= G\cdot S_\sigma.\]
    Therefore,
    \begin{equation}
        \mathcal{M}_{G,\Psi}^{\fr}=\bigcap_{\sigma\in\Sigma(n)}\pi_\sigma^{-1} (G\cdot S_\sigma).
    \end{equation}
    We know $G$-orbit of a finite set is locally closed. Since finite intersections and the pre-image under a continuous map of locally closed sets are locally closed, we have $\mathcal{M}_{G,\Psi}^{\fr}$ is a locally closed subvariety of $\prod_{\rho\in\Sigma(1)}G/ Q_\rho$.
\end{proof}


\section{Families of toric principal bundles}   \label{sec-fine-moduli}
In this section we sketch ideas on how to show that the moduli space $\mathbb{M}^\fr_{G, \Psi}$ is a \emph{fine} moduli space. Let $S$ be a scheme over $\k$, and let $T_S$ be the relative torus $T\times S$.
For schemes $X$ and $Y$ over $\k$, by $X \times Y$ we mean $X \times_{\Spec(\k)} Y$.

\begin{definition}
An $S$-family of toric principal $G$-bundles on $X_\Sigma$ is a toric principal $G$-bundle $\mathcal{E}_S$ on $X_\Sigma\times S$ with an action of $T_S$ compatible with the action on $X_\Sigma\times S$. An $S$-family of framed toric principal $G$-bundles is an $S$-family $\mathcal{E}_S$ of toric principal $G$-bundles together with a choice of an $S$-point over the $S$-point $x_0 \times S$. 
\end{definition}

We expect the following to be true:
Let $S$ be an affine scheme over $\k$. Let $U_\sigma$ be the affine toric variety corresponding to a cone $\sigma \subset N_\R$. Let $\mathcal{P}_{\sigma, S}$ be a toric principal $G$-bundle over the scheme $U_\sigma \times S$. Then there is a family of homomorphisms $\phi_{\sigma, S}: T_S:=T \times S \to G$ over $S$ such that $\mathcal{P}_{\sigma, S}$ is equivariantly isomorphic over $S$ to $G \times U_{\sigma} \times S$ (here $T_S$ acts on $G \times U_\sigma \times S$ by acting on $U_\sigma \times S$ in the usual manner and acting on $G$ via the family of homomorphisms $\phi_{\sigma, S}$). That is, we have the following commutative diagram:
$$
\begin{tikzcd}
\mathcal{P}_{\sigma, S} \arrow[r, "\cong"] \arrow[d] & G \times U_\sigma \times S \arrow[dl] \\ U_\sigma \times S
\end{tikzcd}
$$

Using the above, one should be able to give a classification of $S$-families of framed toric principal bundles extending \cite{Kaveh-Manon} classification (Theorem \ref{th-KM}). This then will immediately imply the following:  
Let $\mathbb{M}_{G, \Psi}^{\fr}:\Sch\to\textbf{Sets}$ be the moduli functor
\begin{equation*}
    \mathbb{M}_{G, \Psi}^{\fr}(S)=\left\{\begin{aligned}&\text{isomorphism classes of $S$-families of framed toric principal} \\ &\text{$G$-bundles on $X_\Sigma$ with total equivariant characteristic class $\Psi$}\end{aligned}\right\}.
\end{equation*}

Then $\mathcal{M}_{G,\Psi}^{\fr}$ is a fine moduli space via $\Hom(-,\mathcal{M}_{G,\Psi}^{\fr})\simeq\mathbb{M}_{G, \Psi}^{\fr}$.

We note that $G$ naturally acts on $\mathcal{M}^{\fr}_{G, \Psi}$ by acting on the frames.
The moduli $\mathcal{M}_{G, \Psi}$ of toric principal $G$-bundles over $X_\Sigma$ with total equivariant characteristic class $\Psi$ is the quotient stack of $\mathcal{M}^{\fr}_{G, \Psi}$ by $G$.    

\end{document}